\newtheorem{lemma}{Lemma}[section]
\newtheorem{theorem}[lemma]{Theorem}
\newtheorem{prop}[lemma]{Proposition}
\newtheorem{remark}[lemma]{Remark}
\begin{document}
\title[Riemann's non-differentiable function and the binormal flow]{Riemann's non-differentiable function\\and the binormal curvature flow}
\author[V. Banica]{Valeria Banica}
    \address[V. Banica]{Sorbonne Universit\'e, CNRS, Universit\'e de Paris, Laboratoire Jacques-Louis Lions (LJLL), F-75005 Paris, France, and Institut Universitaire de France (IUF)} 
\email{Valeria.Banica@ljll.math.upmc.fr}
\author[L. Vega]{Luis Vega}
\address[L. Vega]{BCAM-UPV/EHU Bilbao, Spain, luis.vega@ehu.es} 
\email{lvega@bcamath.org}
\date\today
\maketitle
%\tableofcontents

\begin{abstract}
We make a connection between a famous analytical object introduced in the 1860s by Riemann, as well as some variants of it, and a nonlinear geometric PDE, the binormal curvature flow. As a consequence this analytical object has a non-obvious nonlinear geometric interpretation. We recall that the binormal flow is a standard model for the evolution of vortex filaments. We prove the existence of solutions of the binormal flow with smooth trajectories that are as close as desired to curves with a multifractal behavior. Finally, we show that this behavior falls within the multifractal formalism of Frisch and Parisi, which is conjectured to govern turbulent fluids. \\
\end{abstract}
 
 \section{Introduction}
In this article we construct the graph of Riemann's non-differentiable function, and variants of it, by using the binormal curvature flow, a geometric flow of curves in three dimensions that is related to the evolution of vortex filaments. We also make a rigorous connection between the binormal flow and the multifractal formalism of Frisch and Parisi.

\subsection{Riemann's function and the multifractal formalism} A classical problem of mathematical analysis is finding real variable functions that are continuous but not differentiable at any point. Although it seems that the first example is due to Bolzano, traditionally the reference names in this matter are Riemann and Weierstrass, the latter attributing to Riemann the example
\begin{equation}
\label{Riemann}
\varphi_R(t)=\sum_{j=1}^\infty\frac{\sin(tj^2)}{j^2}.
\end{equation}
In fact Weierstrass, faced with the impossibility of proving that the previous function is not differentiable at any point, proposes his own examples which are later known as Weierstrass' functions. After fundamental contributions from Hardy in 1915 \cite{Har}, the problem was not solved until 1960 by Gerver, who proved in \cite{Ge70} and \cite{Ge71} that the function $\varphi_R$ is not differentiable except at times $t_{p,q}=\pi p/q$, with $p$ and $q$ odd numbers, in which case the derivative is precisely $-1/2$. Later Duistermaat \cite{Du} studied the self-similarity properties of the complex function, intimately associated with the Rieman's function $\varphi_{R}$, defined as:
 \begin{equation}
\label{Duis}
\varphi_D(t)=\sum_{j=1}^\infty\frac{e^{i tj^2}}{ij^2}.
\end{equation}
He drew attention to the apparent fractal properties of the graph generated by it. Finally, Jaffard proved using the wavelet transform in \cite{Ja} that in fact Riemann's function $\varphi_{R}$, and analogously the complex version studied by Duistermaat $\varphi_{D}$ is a multifractal function that moreover satisfies what is known as the multifractal formalism of Frisch and Parisi. The motivation of the latter notion has its roots in the theory developed by Frisch and Parisi to explain certain data obtained in \cite{AHGA} by Antonia, Hopfinger, Gagne and Anselmet  on the velocity structure functions in turbulent shear flows, that exit the homogeneous and isotropic framework of Kolmogorov 41's theory of turbulence.
 
More concretely, Jaffard's result in \cite{Ja} is about the spectrum of singularities, that is the function $d_f(\beta)$ which associates $\beta$ with the Hausdorff dimension of the sets of points $t_0$ where  $f$ has pointwise H\"older regularity of exponent $\beta$. This H\"older exponent is defined as the supremum of $\{\alpha: f\in \mathcal C^\alpha (t_0)\}$. Here $\mathcal C^\alpha (t_0)$ stands for the functions $f$ for which there exists a polynomial P of order at most $\alpha$ such that locally at $t_0$
 $$|f(t)-P(t-t_0)|<C|t-t_0|^\alpha.$$ 
 For instance Weierstrass' functions $W_{a,b}(x)=\sum_{n\in\mathbb N^*}a^n\cos(b^n x)$ with $a<1<ab$ are nowhere differentiable, but have constant H\"older exponent $\alpha=-\log a\slash \log b$. Thus they belong to the class of monofractal functions characterized by the fact that their spectrum support are reduced to one point, encoding, despite of the fractal appearance of its graph, a sort of disciplined irregularity. However, the points where the H\"older exponent is reached might be a fractal set, and actually this is the reason of the monofractal label. The devil's staircase is a famous example of monofractal function, as it has only one finite H\"older exponent, reached on the Cantor's triadic set.  In turn, multifractal functions are those whose H\"older exponent takes at least two finite values. The most complex such functions are those with spectrum positive at least on a whole interval. This encodes the fact that the regularity varies roughly between close points. For more details on these notions one can consult \cite{Ja2}.
 In \cite{Ja} it is proved that for $\beta \in [1/2,3/4]$,
 %{\footnote{\textcolor{blue}{Should we talk also about the rest of the spectrum, that is $d(\frac 32)=0$ and $d(\beta)=+\infty$ for the rest ? This might mean that we should also check the same properties for $\mathfrak{\tilde R}$ appearing from the helices. From \cite{ChUb} we can get that for $\beta>3/4$ we have $d(\beta)$ is zero or infinity. \\ I would say no, as we are interested to point out Frisch-Parisi multifractal formalism, that comes only from the interval $[1/2,3/4]$.}}}
\begin{equation}
\label{spectrum}d_{\varphi_R}(\beta)=4\beta-2.
\end{equation}
It was also shown  in \cite{Ja} that \eqref{spectrum} fits with what Frisch and Parisi conjecture in \cite{FP}:
\begin{equation}\label{FP}
d_{f}(\beta)=\inf_p (\beta p-\eta_{f}(p)+1),
\end{equation}
where $\eta_{f}(p)$ is defined in terms of Besov regularity:
\begin{equation}\label{Besov}
\eta_{f}(p)=\sup\{s,\,f\in B_p^{\frac sp,\infty}\}.
\end{equation}
We refer the reader to \S 8.5.3 of \cite{Fr} and p.443 of \cite{Ja} for the details on this multifractal formalism. Also, it was proved recently in \cite{BoEcVi} that Riemann's function is intermittent. The results in \cite{Ja} and \cite{BoEcVi} are analytical in nature, and no direct connection is established between Riemann's function and turbulence. The aim of this article is to make a connection between Riemann's function and the time evolution of vortex filaments. 
 
 \subsection{Vortex filaments: the binormal flow model and particular solutions}
 The vortex filaments are present in 3-D fluids having vorticity concentrated along a curve, and are a key element of quantum and classical fluid turbulent dynamics. This low regularity framework is difficult to analyze through Euler and Navier-Stokes equation. It is however at the heart of current investigations (see for instance \cite{JeSe},\cite{BGHG}). In this article we consider the binormal flow equation (BF), a classical reduced model for vortex filament dynamics. This model was formally derived by truncating the integral given by Biot-Savart's law (\cite{DaR},\cite{MuTaUkFu},\cite{ArHa},\cite{CaTi}). Recently a rigorous argument, but still under some strong assumptions, has been given by Jerrard and Seis in \cite{JeSe}. If the vorticity concentrates along a curve $\chi(t,x)$, where $t$ stands for the temporal variable and $x$ is the arclength parameter, the BF evolution is 
\begin{equation}\label{VFE}\partial_t\chi=\partial_x\chi\wedge\partial_x^2\chi.
\end{equation}
Using the Frenet system  is immediate to see that \eqref{VFE} is also written as
\begin{equation}
\label{bf}\partial_t\chi=\kappa  b,
\end{equation}
where $\kappa$ stands for the curvature of the curve and $ b$ for the binormal vector.
By differentiation with respect to $x$, the tangent vector $T$ of a BF solution solves the Schr\"odinger map with values in the unit sphere $\mathbb S^2$, that is the classical continuous Heisenberg model used in ferromagnetics
\begin{equation}
\label{SM}\partial_t T=T\wedge\partial_x^2T.
\end{equation}
Finally, and thanks to the Hasimoto transformation:
$$\psi(t,x)=\kappa(t,x) e^{i\int_0^x\tau(t,s)ds},$$
with $\tau$ denoting the torsion of the curve, one gets that the function $\psi(t)$, called filament function of $\chi(t)$, satisfies the 1-D focusing cubic Schr\"odinger equation 
\begin{equation}\label{cubic}\begin{array}{c}i\psi_t +\psi_{xx}+\frac 12\left(|\psi|^2-A(t)\right)\psi=0,%\\u(0)=\sum_{k\in\mathbb Z}\alpha_k\delta_k,
\end{array}
\end{equation}
for some real function $A(t)$ (\cite{Ha}). Conversely, from a solution of the 1-D cubic Schr\"odinger equation one can construct a solution of the binormal flow solving either the Frenet equations, or better through the construction of a parallel frame $(T,e_1,e_2)(t,x)$. This type of frame fits better with our needs because it is not necessary to suppose that $\kappa>0$ (for details on this construction see for instance \S 2 of \cite{BVARMA}). It is important to note that with this construction, the binormal flow solution obtained from $\psi(t,x)$ solution of \eqref{cubic} is the same as the one obtained from $\psi(t,x)e^{-i\frac{\Phi(t)}{2}}$ solution of \eqref{cubic} with $A(t)$ replaced by $A(t)-\Phi(t)$. Thus one can always reduce to the usual cubic nonlinear Schr\"odinger equation, i.e., \eqref{cubic} with $A(t)=0$.

Simple examples that can be obtained by this construction are:
\begin{itemize}
\item the straight line; $\psi(t,x)=0$ and $A(t)=0$;
\item the circle;  $\psi(t,x)=c>0$ and $A(t)=c^2$;
\item the helix;  $\psi(t,x)=c e^{ix\omega_0-it\omega_0^2}$ with $c>0$ and $A(t)=c^2$;
\item the self-similar solutions; $\psi(t,x)=\frac{c}{\sqrt t} e^{i\frac {x^2}{4t}}$ with $c>0$ and $A(t)=\frac{c^2}{t}$.
\end{itemize}
After integrating the frame system one gets  solutions of the non-linear equations \eqref{SM} and \eqref{VFE}. For doing that one needs to know the trajectory in time of one point. This is rather easy for the first three examples but is more delicate for the last one. As a matter of fact, it is better to solve directly \eqref{SM} and \eqref{VFE} to get the four examples mentioned above, instead of using \eqref{cubic}. 

For instance for the selfsimilar solutions  it is enough to look for solutions of the type $\chi(t,x)=\sqrt t G(x/\sqrt t)$. Then it is  easy to get that $G$ has to solve the non-linear ode
\begin{equation}
\label{G}\frac12G-\frac {x}{2}G'=G'\wedge G''.
\end{equation}
 From this is rather simple to conclude that $G$ is determined by the fact that the curvature has to be a constant $c$ and the torsion has to be $\tau(s)=s/2$, see \cite{Bu}. This means that $\chi(t)$ has curvature $\kappa(t,x)=\frac{c}{\sqrt{t}}$ and torsion $\tau(t,x)=\frac{x}{2t}$, and that $\chi(t)$ tends to two different lines at $x\pm \infty$ that are  the same at all times. As a consequence $\chi(t)$ is a smooth function for $t>0$ that becomes at $t=0$ a polygonal line with one corner located at $x=0$. 
 These selfsimiliar solutions were characterized in \cite{GRV}.  We will make a very strong use of this characterization in this paper. In particular the angle $\theta$ of the corner is related to $c$ by the formula
 \begin{equation}\label{angle}
\sin\frac{\theta} 2=e^{-\frac\pi 2c^2}.
\end{equation}
Recall that if $\chi_0$ is a polygonal line with just one corner of angle $\theta$ located at $x=0$, then its curvature is given by $\kappa(0,x)=(\pi-\theta) \delta(x)$. Nevertheless,  for constructing the solution of the binormal flow for that $\chi_0$,  one has to solve \eqref{cubic} with initial data $\psi(0,x)=c\delta(x)$ and $c$ as in \eqref{angle}. We will also need to know what is the relation  between $G(0)$ and the two asymptotic lines of  $G$ at infinity and the plane that contains them, see \cite{GRV}. Observe that from \eqref{G} we get that  the trajectory of the corner is
$$\chi(t,0)=\sqrt t G(0)=2\frac{c}{\sqrt t}b(0),$$
with $(T(0), n(0), b(0))$ the Frenet frame at $x=0$ of the profile curve $G$,  which can be taken any orthonormal matrix due to the rotation invariance of \eqref{G}. In this paper we will follow \cite{GRV} and take $(T(0), n(0), b(0))$ the canonical orthonormal basis of $\mathbb R^3$.

Similarly, the straight line, characterized by $\kappa=0$, is a trivial solution of \eqref{VFE}, and the circle and the helix can be easily obtained by looking at traveling solutions of \eqref{SM}. This immediately gives the dynamics of these solutions, and the particular fact that they conserve their shapes. Indeed, the circle moves with a constant speed along the axis perpendicular to the plane where it is contained, with direction depending on the initial orientation given by the arclength parametrization. The helix evolves by screwing up or down, also depending on the initial orientation. At this point it is important to recall that vortex filaments with the shapes of straight lines, circles, and helices  do exist, both in experiments and as solutions of Euler equations. Also the selfsimilar solutions are very reminiscent of the flow behind a delta wing jet and in the reconnection process of helium superfluid. We refer the reader to \cite{BV5} for the corresponding references. 

It is worth mentioning that the helix can be obtained from the circle using one of the symmetries of the set of solutions of \eqref{cubic}. These are the Galilean transformations: if $\psi(t,x)$ solves \eqref{cubic} with a constant $A(t)$, so does
$$\psi_{\omega_0}(t,x)=e^{ix\tau_0-it\omega_0^2}\psi(t,x-2\omega_0t)
$$ 
for any $\omega_0\in \mathbb R.$

\subsection{Numerical evidence about the connection between Riemann's function and the line, circle, and helix filaments} 
In \cite{DHV} the Galilean transformations are used to look for solutions of \eqref{VFE} that are initially a planar regular polygon of $M$ sides. The reason is simply because, in view of the construction of self-similar solutions, it is natural to look for solutions of the cubic Schr\"odinger equation with initial data
\begin{equation}
\label{Dirac}
\psi_M(0,x)=c_M  \sum_{j\in\mathbb Z} \delta (x-\frac{2\pi}{M}j)=c_M\frac{M}{2\pi}\sum_{j\in\mathbb Z} e^{ixMj},
\end{equation}
with $c_M>0$ related to the angle $\theta_M=\frac{M-2}{M}\pi$ by the relation \eqref{angle}, and $\delta$ denoting  Dirac's delta function. The last equality uses Poisson's summation formula 
\begin{equation}\label{P}
\sum_{j\in\mathbb Z}f(j)=\sum_{j\in\mathbb Z}\hat f(2\pi j)=\sum_{j\in\mathbb Z}\int e^{-i2\pi j y}f(y)dy.
\end{equation}
Then, it follows immediately from \eqref{Dirac} that the $\psi_M(0,x)$ is invariant under the discrete subgroup of the Galilean transformations given by $\omega_0\in\mathbb Z$. As a consequence, if uniqueness holds, it is proved in \cite{DHV} that then 
\begin{equation}
\label{Dirac2}
\psi_M(t,x)=\tilde c_M(t)  \sum_{j\in\mathbb Z}  e^{ixMj-it(Mj)^2},
\end{equation}
with $\tilde c_M(t)$ a real function which is determined by geometric means. Later on it was showed in \cite{BVCPDE} the existence of a formal conservation law whose validity implies that $\tilde c_M(t)$ should indeed be a constant so
$$\tilde c_M^2(t)=-\frac{M^2}{4\pi^2}\ln(\cos\frac{\pi}{M}).
$$
Notice that  for all $M$ we have 
$$\frac1 {\tilde c_M}\,\psi_M(\frac{t}{M^2},\frac{x}{M})=  \sum_{j=-\infty}^\infty e^{ixj-itj^2},
$$
and that $\lim_{M\to\infty}\tilde c_M=\frac 1{4\pi}$. Hence,
\begin{equation}
\label{Dirac3}
\lim_{M\to\infty}4\pi \,\psi_M(\frac{t}{M^2},\frac{x}{M})=  \sum_{j=-\infty}^\infty e^{ixj-itj^2},
\end{equation}
which is the  solution of the linear Schr\"odinger equation with periodic boundary conditions,
\begin{equation}\label{linear}\begin{array}{c}i\psi_t +\psi_{xx}=0
\\ \psi(0,x)=\sum_{j\in\mathbb Z}\delta(x-2\pi j).
\end{array}
\end{equation}
Moreover, this solution describes the Talbot effect in Optics, see \cite{Be}. In \cite{DHV} the consequences of this effect in \eqref{VFE} with initial data given by regular polygons were considered, suggesting a possible connection with the turbulent dynamics observed in non-circular jets (see on this subject for instance \cite{GrGuPa}). Let us mention also that at a less singular level, the Talbot effect for the linear and nonlinear  Schr\"odinger equations on the torus with initial data given by functions with bounded variation has been largely studied (\cite{Os},\cite{Ro},\cite{Ta},\cite{ET},\cite{ChErTz}). Let us mention also that the fractal behavior of one corner has been observed numerically in the context of the architecture of aortic valve fibers in \cite{PMQ} and \cite{SP}.

Immediately we obtain that fixing $x$ at the origin and integrating in time the limit in \eqref{Dirac3} we obtain\footnote{The term $j=0$ is understood to be $t$.},
\begin{equation}\label{1.0}
\int_0^t\sum_{j=-\infty}^\infty e^{-i\tau j^2}\,d\tau=\sum_{j\in\mathbb Z}\frac{e^{itj^2}-1}{ij^2}=:\frak{R}(t).
\end{equation}
In fact 
$$\frak{R}(t)=2\varphi_D(t)+t+\sum_{j\in\mathbb Z^*}\frac 1{j^2},$$ 
where $\varphi_D$ is the function defined in \eqref{Duis} and studied by Duistermaat. It follows from the same arguments given by Gerber that at the points where $\varphi_D(t)$ is differentiable the derivative is precisely $-1/2$. Therefore at those points $\frak{R'}(t)=0$. It turns out that when one looks at the trajectory in the complex plane of $\frak{R}(t)$, see Figure \ref{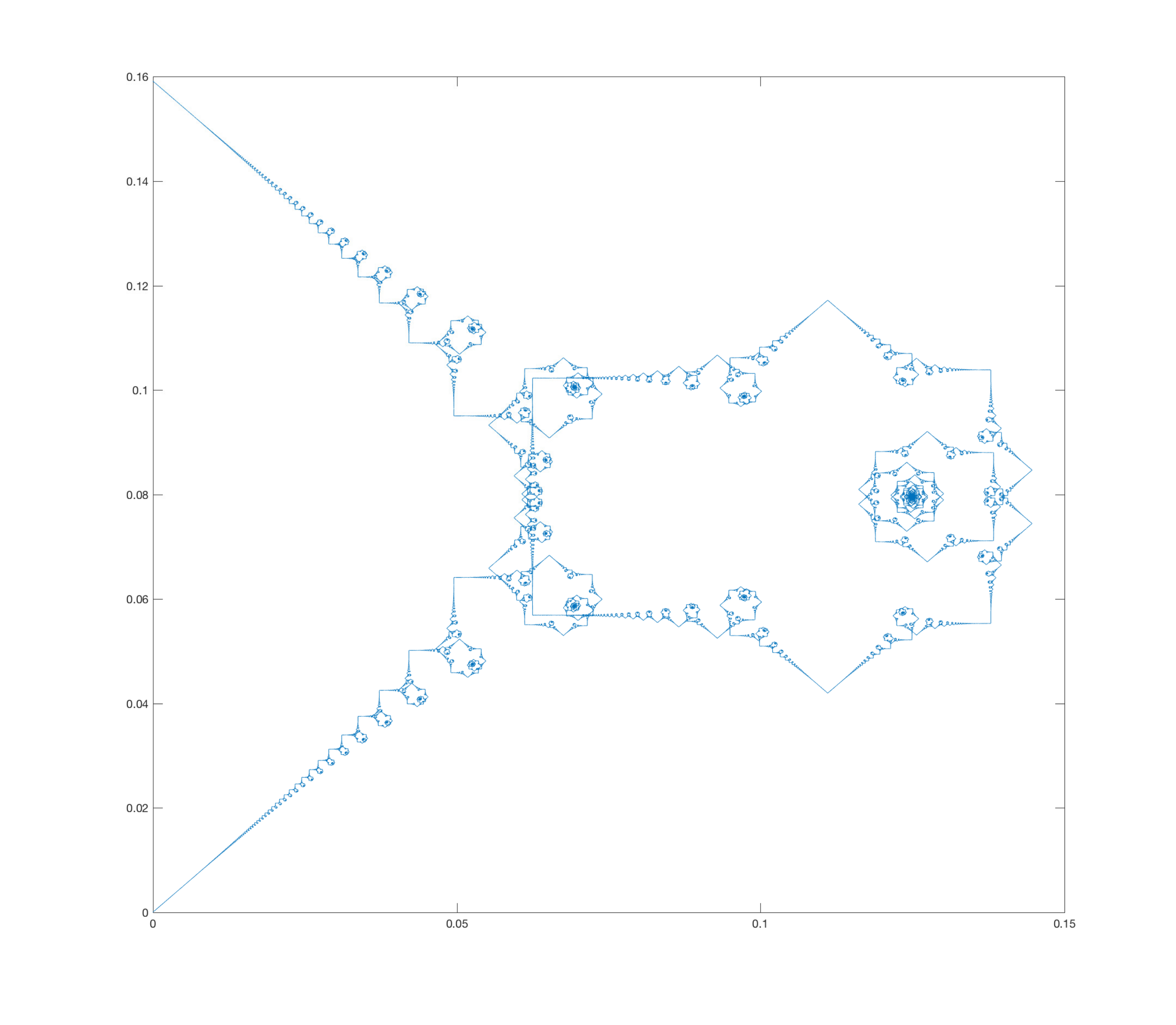},
\begin{figure}[h]
\begin{center}
\includegraphics[scale=0.2]{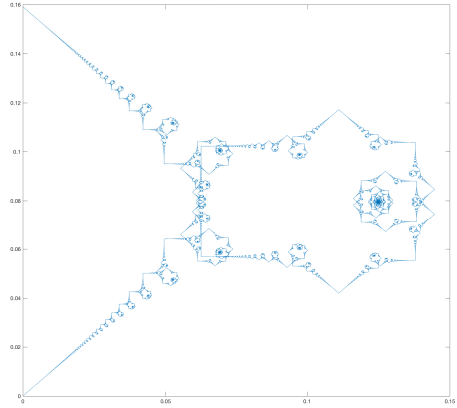}
\caption{\small {{Graph of $\frak{R}(t)=\int_0^t \sum_j e^{i\tau j^2}\,d\tau=\sum_{j\in\mathbb Z}\frac{e^{itj^2}-1}{ij^2}$}}}.
\label{FigRiemann.pdf}
\end{center}
\end{figure}
there is no tangent at those points due to the fact that the curve spirals around them. In fact, it has been recently proved by Eceizabarrena in \cite{Ec}  that  the trajectory, although continuous, and contrary to what happens with Riemann's function, does not have a tangent at any point.

Therefore, it was very natural to ask what is the trajectory in time of any of the corners of the M-regular polygon. In Figure \ref{Fig5} 
\begin{figure}[h]
\begin{center}
\includegraphics[width=4.5in]{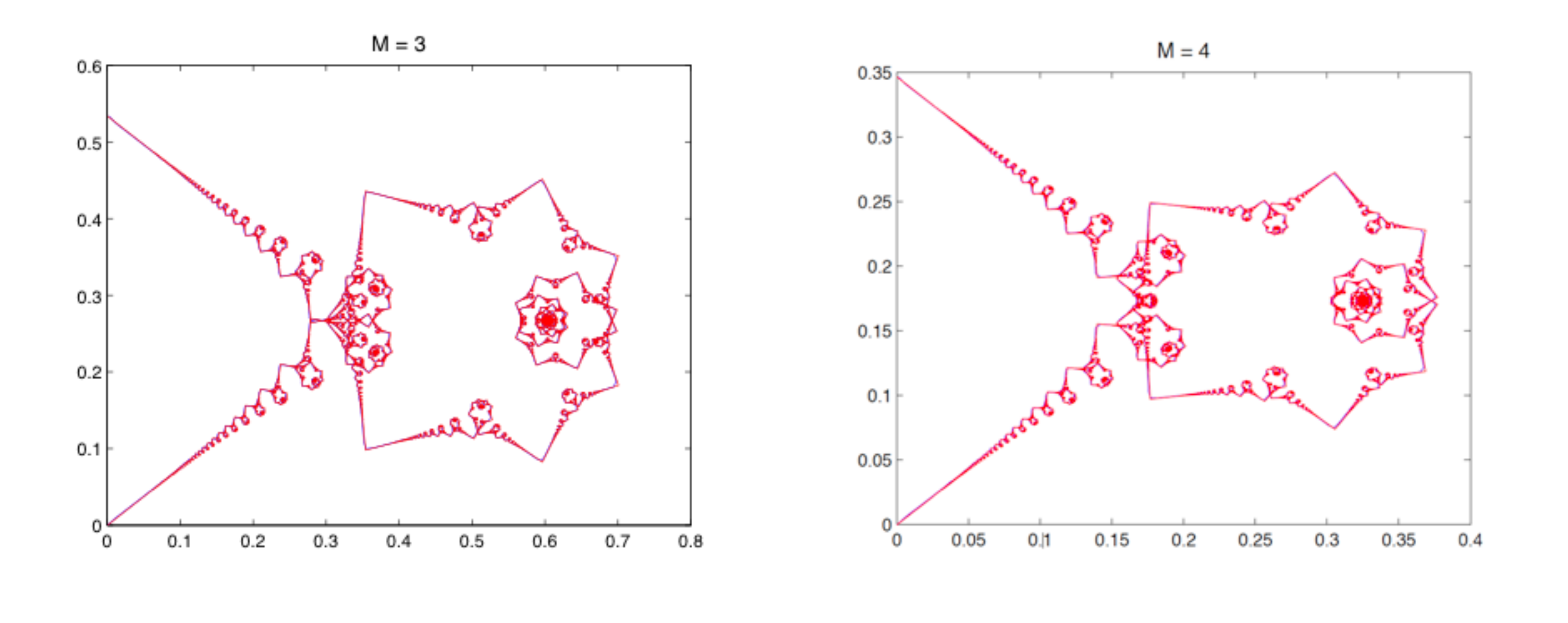}
\caption{\small {{Trajectory that at time $t=0$ starts in a vertex of an equilateral triangle (left) and of a square (right)}}}
\label{Fig5}
\end{center}
\end{figure}
we see the examples obtained in \cite{DHV} for $M=3$ and $M=4$. Similar pictures can be obtained for any $M$ and it becomes evident after looking at them, that they converge to the one of  $\frak{R}(t)$ given in Figure \ref{FigRiemann.pdf}. After doing an appropriate renormalization, this convergence is proved numerically in \cite{DHV}. Later on, it is also proved numerically in \cite{DHV2} the convergence of the Fourier coefficients of  the time derivative of the trajectory. These results have been extended in \cite{DHKV} to the case of helical regular polygons that converge to either a helix or to straight line. In the case of the helices, and depending on its pitch, different versions of $\frak{R}$ are obtained\footnote{The sequence of the squares that appears in  $\frak{R}$ has to be changed into the squares of any arithmetic progression with integer coefficients, see \cite{DHKV}.}.
The results in this paper prove analytically the aforementioned convergence using an approximation by non-closed polygonal lines.

\subsection{Presentation of the results}

Our main statement asserts the existence of various families of solutions $\{\chi_n\}_{n\in\mathbb N}$ of the binormal flow such that the trajectory of the corner $\chi_n(t,0)$ near $t=0$ is governed by the modified version of Riemann's function $\frak{R}$ as $n$ goes to infinity. 

\begin{theorem}\label{th} Let $n\in\mathbb N^*$, $\nu\in]0,1]$, $\Gamma>0$. There exist $T>0$, independent of $n$, and smooth solutions $\chi_n(t)$ of the binormal flow on $(-T,T)\setminus\{0\}$, weak solutions on $(-T,T)$, that at time $t=0$ become polygonal lines $\chi_n(0)$ with corners located at $j\in\mathbb Z$ with $|j|\leq n^\nu$, of same torsion $\omega_0\in\pi\mathbb Q$ and angles $\theta_n$ such that 
\begin{equation}\label{thetan}
\lim_{n\rightarrow\infty}n(\pi -\theta_n)=\Gamma,
\end{equation}
and
\begin{equation}\label{tang}
\chi_n(0,0)=(0,0,0),\quad \partial_x\chi_n(0,0^\pm)=(\sin\frac{\theta_{n}}2,\pm\cos\frac{\theta_{n}}2,0).\end{equation}
%$\pi -\theta_n\overset{n\rightarrow\infty}{\approx}\frac{2\sqrt{\pi}}n$.\par

For these solutions we have the following description of the trajectory of the corner $\chi_n(t,0)$: \begin{equation}\label{cv}
n(\chi_{n}(t,0)-\chi_{n}(0,0))-(0,\Re(\frak{\tilde R}(t)),\Im(\frak{\tilde R}(t)))\overset{n\rightarrow\infty}\longrightarrow 0,
\end{equation}
uniformly on $(0,T)$. The function $\frak{\tilde R}$ is mutlifractal, its spectrum of singularities satisfies \eqref{spectrum} and the multifractal formalism formula \eqref{FP}. In the torsion-free case $\frak{\tilde R}(t)=-\Gamma\frac{\frak{R}(4\pi^2t)}{4\pi^2}$, with $\frak{R}$ given in \eqref{1.0}. The expressions for the cases with a non-trivial torsion are given in \eqref{Rtilde} and \eqref{varR1}.
\end{theorem}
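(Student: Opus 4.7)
The plan is to construct $\chi_n$ via the Hasimoto transformation from NLS solutions with a finite Dirac comb as initial datum, and then to read off the trajectory of the origin corner as a Riemann-type partial sum. First, an expansion of \eqref{angle} under \eqref{thetan} gives $c_n\sim\Gamma/(2\sqrt\pi\,n)$, so I take as NLS initial datum
\[
\psi_n(0,x)=c_n\sum_{|j|\leq n^\nu} e^{ij\omega_0}\,\delta(x-j),
\]
where the Galilean phases $e^{ij\omega_0}$ encode the common torsion $\omega_0\in\pi\mathbb Q$ in the spirit of \eqref{Dirac}--\eqref{Dirac2}.

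The second and main analytical step is to build $\psi_n$ on a time interval $(-T,T)$ with $T$ independent of $n$. I look for
\[
\psi_n(t,x)=\sum_{|j|\leq n^\nu}\frac{c_n}{\sqrt t}\,e^{i(x-j)^2/(4t)}\,e^{ij\omega_0}+R_n(t,x),
\]
so that near each integer $j$ the solution is a translated/Galileaned self-similar profile and $R_n$ solves a perturbed cubic NLS equation with zero initial data, which I treat by a fixed point in a Bourgain-type space as in \cite{BVCPDE}. The smallness $c_n\sim 1/n$, together with the $\sqrt t$-scale decoupling of distinct self-similar profiles, allows the cubic nonlinearity and the cross-interactions to be absorbed uniformly in $n$. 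The BF curve $\chi_n$ is then recovered by the parallel-frame procedure of \S 2 of \cite{BVARMA}: this yields a smooth BF solution on $(-T,T)\setminus\{0\}$ which extends continuously at $t=0$ to the polygonal line with corners at integers $|j|\leq n^\nu$, the angles satisfying \eqref{thetan}, constant torsion $\omega_0$, and initial tangents \eqref{tang}.

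For the trajectory at the corner I use \eqref{bf} in the form $\chi_n(t,0)-\chi_n(0,0)=\int_0^t(\kappa b)(s,0)\,ds$ and plug in the decomposition above. Evaluating $\frac{c_n}{\sqrt s}e^{i(x-j)^2/(4s)}e^{ij\omega_0}$ at $x=0$ produces a phase of the form $e^{-isj^2+ij\omega_0}$, while the binormal $b(s,0)$ is, to leading order, the canonical unit vector of the $yz$-plane given by the central ($j=0$) self-similar profile. Summing over $|j|\leq n^\nu$, integrating in $s$, and expanding the frame to order $1/n$ produces partial sums of the Riemann-type series; the remainder coming from $R_n$ and from higher-order terms in the frame is $o_n(1)$ uniformly in $t\in(0,T)$. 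This yields \eqref{cv} with $\frak{\tilde R}$ the advertised series, reducing to $-\Gamma\frak{R}(4\pi^2 t)/(4\pi^2)$ when $\omega_0=0$ and to \eqref{Rtilde}--\eqref{varR1} otherwise. The multifractal properties of $\frak{\tilde R}$ then follow from Jaffard's theorem \cite{Ja}, since each of the relevant versions of $\frak{\tilde R}$ differs from $\varphi_D$ or from its arithmetic-progression variants of \cite{DHKV} only by an affine function of $t$, which affects neither the pointwise H\"older exponents nor the Besov regularity \eqref{Besov}.

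The hardest part is the second step: controlling the nonlinear interactions of $n^\nu$ self-similar profiles sharing a common $\sqrt t$ scale, uniformly in $n$. Compared with the single-corner theory of \cite{GRV,BVARMA}, one must prevent the number of corners from blowing up the remainder $R_n$, and the precise balance between the amplitude $c_n\sim 1/n$ and the number of corners $\lesssim n^\nu$ (hence the hypothesis $\nu\leq 1$) is what allows the estimates to close on a time interval $T$ that does not shrink with $n$. A secondary difficulty in Step 4 is to maintain enough quantitative control on the parallel frame near $t=0$ to identify the correct leading-order binormal direction and thereby isolate the exact phases $e^{itj^2}$ that produce $\frak{R}$ in the limit.
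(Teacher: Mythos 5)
Your overall architecture is the right one and matches the paper's at the top level: NLS data $c_n\sum_{|j|\le n^\nu}e^{ij\omega_0}\delta(x-j)$, a fixed-point construction of $\psi_n$ for a time $T$ independent of $n$, recovery of $\chi_n$ by the parallel-frame construction, and reading off $\chi_n(t,0)$ via the Hasimoto/frame equations with Poisson summation producing the Riemann sum. But there are two gaps that would break the argument as written.

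\textbf{The multifractal claim for the torsion variants does not follow from Jaffard.} You assert that the non-trivial torsion versions, which after rescaling are the functions $\frak R_{n,m}(t)=\sum_{j}\bigl(e^{i2\pi t(mj-n)^2}-1\bigr)/(mj-n)^2$, ``differ from $\varphi_D$ or its arithmetic-progression variants of \cite{DHKV} only by an affine function of $t$,'' so that Jaffard's theorem applies. This is incorrect. Writing $(mj-n)^2=m^2(j-n/m)^2$, the phase after the rescaling $t\mapsto t/m^2$ is $e^{it(j-n/m)^2}$; there is no affine change of the single variable $t$ that turns $\sum_j e^{itj^2}/j^2$ into $\sum_j e^{it(j-a)^2}/(j-a)^2$ for a non-integer $a$, since the cross term $-2atj$ couples $j$ and $t$. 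Moreover \cite{DHKV} is numerical and does not prove a multifractal theorem for squares of arithmetic progressions. The paper has to establish the spectrum of singularities and the Frisch--Parisi formula from scratch for $\frak R_{n,m}$ (Propositions~\ref{propspectrum} and~\ref{propfractal}), following Chamizo--Ubis for the H\"older analysis near rationals and proving new $L^p$ bounds on dyadic partial sums of the corresponding exponential sum $\sum_j e^{i2\pi(tj^2-t\frac{2n}{m}j)}$, for which a Gauss-sum/Talbot-effect argument (Lemma~\ref{estexpsums}) is needed. Your proof of the second part of the theorem statement is therefore missing an essential ingredient.

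\textbf{The convergence of the frame and the resonant cancellation are the technical core, not a remainder estimate.} You identify this as the hard part but offer no mechanism. In the paper, one cannot simply declare that ``$b(s,0)$ is, to leading order, the canonical unit vector'': the modulated normal $e^{i\Phi_n}N_n(t,0)$ has a phase with $c_n^2\sum_{|j|\le n^\nu}\log|j|/\sqrt t$, and identifying its $t\to0$ limit $\tilde N_n(0,0)$ and its $n\to\infty$ limit requires the full asymptotics of $A^\pm_{c_n},B^\pm_{c_n}$ from \cite{GRV} (Proposition~\ref{cvN}). Worse, the correction $g_n$ multiplying the Riemann sum in the trajectory must be shown to vanish in the limit (Proposition~\ref{propgn}); this fails unless a specific resonant term in $N_t$ cancels against the $A(t)$ term in \eqref{Nt}, and the remaining terms are killed by several rounds of oscillatory integration by parts in both $s$ and $\tau$ together with the $\frac1n$-smallness of $c_n$. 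Also, ``a Bourgain-type space'' is not the right tool for the first step: the paper's fixed point for $R_{n,k}$ is performed in $l^1$- and $l^{1,1}$-based sequence spaces precisely because the $l^1$-norm $\|\alpha_{n,k}\|_{l^1}$ stays $O(1)$ while the $l^{2,s}$ norms used in \cite{BV5} grow, and this is what makes $T$ independent of $n$. These details are where the theorem is actually proved.
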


In the torsion-free case the polygonal lines in Theorem \ref{th} can be chosen to approach the following special cases:
\begin{itemize}
\item the straight line; by taking $\nu<1$. Indeed, the total variation angle of $\chi_n(0,x)$ as $x$ varies from $-\infty$ to $\infty$ is 
$$\theta_{n}^{total}:=(\pi-\theta_n)(2\left \lfloor{n^\nu}\right \rfloor+1)\overset{n\rightarrow\infty}\approx \frac \Gamma n(2\left \lfloor{n^\nu}\right \rfloor+1),$$
so if $\nu<1$ we get convergence of $\theta_{n}^{total}$ to zero as $n$ goes to infinity.
\item a regular polygonal loop; by taking $\nu=1$ and $\theta_n=\frac {(2n-1)\pi}{2n+1}$. Indeed,   this means that the shape of $\chi_{n}(0)$ is composed of a regular closed polygon with $2n+1$ edges of size $1$, for  $|x|\leq n$, and of two half-lines as $|x|\geq n$.
\item a regular polygonal multi-loop; by increasing the number of corners of the regular polygonal loop to $x\in\{ j\in\mathbb Z, |j|\leq m n^\nu\}$, for $m\in\mathbb N^*$. The proof of the conclusion of Theorem \ref{th} goes the same, for  times $T_m$ of size ~$\frac 1m$. 
\end{itemize}
To approach other natural special cases we recall that the binormal flow is invariant under scaling: if $\chi$ is a solution then $\lambda\chi(\frac t{\lambda^2},\frac x\lambda)$ is a solution also for $\lambda>0$. Thus from Theorem \ref{th} we get for $\mu\in\mathbb R$ solutions of the binormal flow 
$$\tilde\chi_{n}(t,x)=\frac 1{n^\mu} \chi_{n}(n^{2\mu}t,n^\mu x).$$
For times smaller than $\frac T{n^{2\mu}}$ the convergence  \eqref{cv} becomes 
\begin{equation}\label{cvRloopcircle}
n^{1+\mu}(\tilde \chi_{n}(\frac t{n^{2\mu}},0)-\tilde\chi_{n}(0,0))-(0,\Re(\frak{\tilde R}(t)),\Im(\frak{\tilde R}(t)))\overset{n\rightarrow\infty}\longrightarrow 0,
\end{equation}
uniformly on $(0,T)$. This convergence is for instance valid for polygonal lines that tend to two lines at infinity and that locally approach the following curves::
\begin{itemize}
\item a circular loop; by rescaling the regular polygonal loop above with $\mu=1$. Indeed, $\tilde\chi_{n}(0)$ is composed by a regular closed polygon with $2n+1$ edges of size $\frac 1n$, thus inscribed in a circle of radius $\frac{1}{n\sin\frac\pi{2n+1}}$ for $|x|\leq n$ and two half-lines for $|x|\geq n$. In particular the polygon, as $n$ goes to infinity, converges to a circle of size $\frac 2\pi$. 
\item a circular multi-loop; by rescaling the regular polygonal multi-loop above with $\mu=1$. This example confirms the numerical simulations of \cite{DHKV} that can be seen in the video \url{https://www.youtube.com/watch?v=bwbpKvqGk-o&feature=youtu.be}.
\item the self-similar solution; by proceeding in the following way. Denote by $\theta$ the angle of a self-similar solution, and  choose $\theta_n=\pi-\frac\theta{2n+1}$ and $\nu=1$ so that $\theta_n^{total}=\theta$. Then, the shape of $\chi_{n}(0)$ is composed by a polygonal line with $2n+1$ corners with the same angle and edges of size $1$ that is inscribed in a circular sector of radius of size $2n+1$ for $|x|\leq n$, and of two half-lines for $|x|\geq n$. By rescaling  with $\mu>1$, we get that $\tilde\chi_n(0)$ is composed of a polygonal line with $2n+1$ corners with the same angle $\theta_n$, and edges of size $\frac 1{n^\mu}$ inside a circular sector of radius of size $\frac{1}{n^{\mu-1}}$ for $|x|\leq 1$, and of two half-lines for $|x|\geq 1$. Moreover the angle between the half-lines is precisely $\theta$. 
\end{itemize}

We note that in the above configurations the loops imply the existence of self-intersections, something that can not happen in a vortex filament. Nevertheless, they are relevant from a theoretical point of view as an analytical approximation to a real dynamics. Observe that the number of loops, although fixed, can be arbitrary large.

In the non-trivial torsion case the above examples give families of helicoidal polygonal lines. This way we have a non-planar approximation of the straight line, as well as, after rescaling, an approximation of a helical shape with as many turns as desired.\\

 In order to explain the proof of Theorem \ref{th} we have to recall some previous work done in \cite{BV5}  regarding the evolution through the binormal flow of non-closed polygonal lines $\chi_0(x)$ that tend as $x\rightarrow\pm\infty$ to two lines. These polygonal lines are characterized modulo a translation and a rotation by the fact that the corners are located at the integers $k\in \mathbb Z$ and by the curvature angles $\theta_k$ and torsion angles at the corners. For constructing the evolution of $\chi_0$ according to the binormal flow, we define first a sequence of complex numbers $\{\alpha_k\}$ in terms of the curvature and torsion angles of $\chi_0$. In particular, the identity \eqref{angle} has to be satisfied. The identity involving the torsion angles is more complicated, and is detailed in \S\ref{secthelix}.  We impose that for $s>1/2$ and $p=2$, the sequence $\{\alpha_k\}$ has to belong to   $l^{p,s}$,  the space of sequences of complex numbers 
 which is determined by the condition
$$\|\alpha_k\|_{l^{p,s}}^p:=\sum_{k\in\mathbb Z} |\alpha_k|^p(1+|k|)^{2s}<+\infty.$$ 
Then, we solve on $t>0$ the equation \eqref{cubic} with $A(t)=\frac{\sum_{k\in\mathbb Z}|\alpha_k|^2}{2\pi t}$ and 
%\begin{equation}\label{cubicmod}\begin{array}{c}i\psi_t +\psi_{xx}+\frac 12\left(|\psi|^2-\frac{\sum_{k\in\mathbb Z}|\alpha_k|^2}{2\pi t}\right)\psi=0,%\\u(0)=\sum_{k\in\mathbb Z}\alpha_k\delta_k,
%\end{array}
%\end{equation}
with
 \begin{equation}\label{ansatz}\psi(t,x)=\sum_{k\in\mathbb Z}e^{-i\frac{|\alpha_k|^2}{4\pi}\log\sqrt{t}}(\alpha_k+R_k(t))e^{it\Delta}\delta(x-k),\end{equation}
 such that
\begin{equation}\label{controls}\sup_{0<t<T}\frac 1{t^{\gamma}}\|\{R_k(t)\}\|_{l^{2,s}}+t\|\{\partial_tR_k(t)\}\|_{l^{2,s}}<C.\end{equation}
Here $T$ and $C$ depend only on $\|\{\alpha_k\}\|_{l^{2,s}}$ and $\gamma\in(0,1)$. It is a remarkable fact that we have the mass conservation
\begin{equation}\label{mass}
\sum_{k\in\mathbb Z}|\alpha_k+R_k(t)|^2=\sum_{k\in\mathbb Z}|\alpha_k|^2,\quad\forall 0\,\leq t\leq T.
\end{equation}
Then from this solution $\psi$ we obtain in \cite{BV5} a smooth solution $\chi(t)$ of the binormal flow \eqref{VFE} on $(-T,T)\setminus\{0\}$, which is a weak solution on $(-T,T)$, that at time $t=0$ becomes the desired polygonal line $\chi_0$.

%The aim of this paper is to display a multifractal behavior, by showing that the trajectory in time near $t=0$ of one corner is dictated by Riemann's function in the limit of approaching a straight line by broken lines . \\

Thus to construct the solutions $\chi_n$ in Theorem \ref{th} we consider for $n\in\mathbb N^*$ sequences $\{\alpha_{n,k}\}_{k\in\mathbb Z}$ satisfying
\begin{equation}\label{defalpha}
\alpha_{n,k}=\left\{\begin{array}{c}c_ne^{ik\omega_0},\quad |k|\leq n^\nu,\\ 0, \quad |k|> n^\nu,\end{array}\right.
\end{equation}
where $c_n>0$ is defined by the identity \eqref{angle} in terms of the angle $\theta_n$ from the statement of Theorem \ref{th}. Note that we have
\begin{equation}\label{c_n}
c_n=\sqrt{-\frac 2\pi \arccos{\frac{\pi-\theta_n}2}}\overset{n\rightarrow\infty}{\approx}\frac \Gamma {2\sqrt \pi n}.%,\quad \pi -\theta_n\overset{n\rightarrow\infty}{\approx}2\sqrt{\pi}|\alpha_n|.
\end{equation}
Then the above results proved in \cite{BV5} insure us the existence of BF solutions that at time $t=0$ are polygonal lines with $2n^\delta+1$ corners located at  $x\in\{-\left \lfloor{n^\nu}\right \rfloor,...,-1,0,1,...,\left \lfloor{n^\nu}\right \rfloor\}$, all of them with the same curvature angle $\theta_n$ and torsion $\omega_0$. 
Having in mind that the binormal flow is invariant under translations and rotations we can consider  $\chi_n(0)$ such that \eqref{tang} holds. \\

Concerning the first part of Theorem \ref{th} we have to observe that if we directly use the results in \cite{BV5} we obtain solutions \eqref{ansatz} of the Schr\"odinger equation \eqref{cubic}, constructed by a Picard iteration procedure that is valid for times $T_n$ that depend on the weighted norm of $\|\{\alpha_{n,k}\}\|_{l^{2,s}}$, $s>1/2$. In particular, we obtain that $T_n$ vanishes as $n$ goes to infinity for $\delta=1$. In this article we shall improve the iteration procedure on $\{R_{n,k}\}_{k\in\mathbb Z}$  by using $l^1-$based spaces. This allows us to consider times  $T$ that depend only on $\|\alpha_{n,k}\|_{l^1_k}$, so that in our case $T$ can be chosen independent of $n$.  Another important remark is that the construction of the solution of the binormal flow \eqref{VFE} done in \cite{BV5}, and that is based on solutions of \eqref{cubic}, depends just on the $l^1$ norm of $\{\alpha_{n,k}\}_{k\in\mathbb Z}$ and of $\{R_{n,k}\}_{k\in\mathbb Z}.$ Therefore,
%that we upper-bounded by weighted norms. 
 the construction of the solutions of the binormal flow from the Schr\"odinger ones is assured by the results in \cite{BV5}.\par
Regarding the second part of the statement of the theorem, we will prove it by splitting the trajectory of the corner into three parts. One part will disappear in the limit due to an improved decay in $n$  that we obtained for  $R_{n,k}$ in $l^1$ and $l^{1,1}$.  Another part will be shown to be negligible by a fine analysis done in the construction of the parallel frame. It will be based on repeated integration by parts on some oscillatory integrals that naturally appear.  They include some problematic resonant terms that eventually disappear thanks to the specific value of $A(t)$. Finally, the last part, and in the torsion-free case, includes Riemann's function that appears thanks to Poisson summation formula. For helicoidal polygonal lines, the proof goes the same except that we end up with some variants of $\frak R$. In this case we prove, following the approach of Chamizo and Ubis in \cite{ChUb}, that their spectrum of singularities is the same as the one of Riemann's function in \eqref{spectrum}. Also, we show that the multifractal formalism of Frish and Parisi \eqref{FP} is satisfied, by using exponential sums estimates.\\

The paper is organized as follows. The proof of the first part of Theorem \ref{th} is done in \S \ref{sectionR}. The second part of Theorem \ref{th} in the torsion-free case is proved in \S \ref{sectionRiemm} using \S \ref{sectionR}, and two results related to the convergence of the normal vectors proved in \S \ref{sectiongn} and \S \ref{sectionNn} respectively. In the last section \S \ref{secthelix} we shall treat the nontrivial torsion cases of Theorem \ref{th}. This involves proving the results by Jaffard in \cite{Ja} in the more general setting of the squares of arithmetic progressions.\\

\section{Proof of the first part of Theorem \ref{th}}\label{sectionR}

In view of Proposition \ref{lemmaRk} below, for each sequence $\{\alpha_{n,k}\}_{k\in\mathbb Z}$ defined in \eqref{defalpha} we get a solution of the Schr\"odinger equation \eqref{cubic} with $A(t)=\frac{\sum_{k\in\mathbb Z}|\alpha_{n,k}|^2}{2\pi t}$, of type \eqref{ansatz}, with time of existence $T$ independent of $n$. As mentioned above in the Introduction, the construction of the corresponding BF solutions involves only the $l^1$ norms of $\{\alpha_{n,k}\}_{k\in\mathbb Z}$ and $\{R_{n,k}\}_{k\in\mathbb Z}$, so the first part of Theorem \ref{th} follows. \\

In the following Proposition we improve the fixed point argument on $\{R_{n,k}(t)\}_{k\in\mathbb Z}$ in a way it suits our purposes here.
% for more details in this direction see \cite{Martin}. 

\begin{prop}\label{lemmaRk} Let $n\in\mathbb N$, $\gamma\in(0,1)$, $q>1$, $C>0$ and $\{\alpha_{n,k}\}_{k\in\mathbb Z}$ a sequence such that $|\alpha_{n,k}|\leq \frac Cn$ for $|k|\leq n$ and $\alpha_{n,k}=0$ otherwise. 
There exist $T\in(0,1)$ depending only on $\gamma$ and $q$ and a unique solution written as
$$\sum_{k\in\mathbb Z}e^{-i\frac{|\alpha_{n,k}|^2}{4\pi}\log\sqrt{\tau}}(\alpha_{n,k}+R_{n,k}(t))e^{it\Delta}\delta_k(x),$$
of the equation
$$\begin{array}{c}i\psi_t +\psi_{xx}+\frac 12\left(|\psi|^2-\frac{\sum_k|\alpha_{n,k}|^2}{2\pi t}\right)\psi=0,%\\u(0)=\sum_{k\in\mathbb Z}\alpha_k\delta_k,
\end{array}$$
with the property:
\begin{equation}\label{Rkestn}\sup_{t\in(0,T)}\|t^{-\gamma}R_{n,k}(t)\|_{l^1_k}\leq C(\gamma,q) \frac1{n^{2-\frac 2q}},
\end{equation}
$$\sup_{\tau\in(0,T)}\|t\,\partial_t R_{n,k}(t)\|_{l^1_k}\leq C(\gamma,q) ,\quad \sup_{\tau\in(0,T)}\|t\,\partial_t R_{n,k}(t)\|_{l^q_k}\leq C(\gamma,q) \frac1{n^{1-\frac 1q}},$$
and
\begin{equation}\label{Rkestnw}\sup_{t\in(0,T)}\|t^{-\gamma}\,R_{n,k}(t)\|_{l^{1,1}_k}\leq C(\gamma,q) \frac1{n^{1-\frac 2q}}.
\end{equation}
\end{prop}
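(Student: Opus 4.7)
My strategy is to adapt the Picard iteration of \cite{BV5} to $l^1_k$-based spaces, so that the time of existence $T$ depends only on the $l^1_k$ norm of $\{\alpha_{n,k}\}$ rather than on the weighted $l^{2,s}$ norm used there. Under the hypothesis $|\alpha_{n,k}|\leq C/n$ on $|k|\leq n$, one has $\|\alpha_n\|_{l^1_k}\lesssim 1$ uniformly in $n$, so such a $T$ will be $n$-independent. First I would derive the ODE satisfied by $\{R_{n,k}\}$ by substituting the ansatz \eqref{ansatz} into the cubic NLS with the prescribed $A(t)$. Using $e^{it\Delta}\delta_k(x) = (4\pi it)^{-1/2}e^{i(x-k)^2/(4t)}$ and the phase identity
$$(x-k_1)^2-(x-k_2)^2+(x-k_3)^2 = (x-k)^2+2(k_1-k_2)(k_2-k_3)$$
on $\{k_1-k_2+k_3=k\}$, the cubic nonlinearity reduces to $1/t$ times a sum over triples with oscillatory phase $e^{i(k_1-k_2)(k_2-k_3)/(2t)}$. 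The resonant set $(k_1-k_2)(k_2-k_3)=0$ combines with the $A(t)$ term and with the log-modulation factor $e^{-i|\alpha_{n,k}|^2\log\sqrt{t}/(4\pi)}$ to leave only a diagonal contribution that is quadratic in $R_{n,k}$, reducing matters to a genuinely non-resonant cubic estimate plus harmless lower-order corrections.

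The heart of the proof is this non-resonant trilinear estimate. Integration by parts in $s$, based on $\frac{1}{s}e^{i\omega/(2s)} = \frac{-2s}{i\omega}\frac{d}{ds}e^{i\omega/(2s)}$, converts $\int_0^t s^{-1}e^{i\omega/(2s)}\,ds$ with $\omega=(k_1-k_2)(k_2-k_3)\neq 0$ into a boundary term of size $t/|\omega|$ plus a remainder of the same magnitude. Summing the resulting trilinear expression in $l^1_k$ and interpolating between $\|\alpha_n\|_{l^1}\lesssim 1$ and $\|\alpha_n\|_{l^q}\lesssim n^{1/q-1}$ by placing $l^q$ on two of the three factors produces the $n^{-(2-2/q)}$ decay required in \eqref{Rkestn}. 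The weighted bound \eqref{Rkestnw} follows from splitting $(1+|k|)\leq(1+|k_1|)+(1+|k_2|)+(1+|k_3|)$ along the resonance, paying the cost $\|\alpha_n\|_{l^{1,1}}\lesssim n$ on one factor and thereby losing exactly a factor of $n$ relative to \eqref{Rkestn}. The bounds on $t\,\partial_t R_{n,k}$ in $l^1_k$ and $l^q_k$ are read off directly from the ODE without time integration, hence without the $1/|\omega|$ gain, which explains their different scaling.

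With these trilinear estimates in hand I would close a standard contraction in the ball $\{R:\sup_{0<t<T}t^{-\gamma}\|R(t)\|_{l^1_k}\leq C_0\,n^{-(2-2/q)}\}$ for the norm implicit in \eqref{Rkestn}. The factor $t^\gamma$ with $\gamma\in(0,1)$ leaves a free power $T^{1-\gamma}$ available to make the map a contraction, and since $\|\alpha_n\|_{l^1}$ is bounded uniformly in $n$ the required $T$ depends only on $\gamma$ and $q$.

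The main obstacle is the oscillatory trilinear sum after one IBP: the naive bound produces $\sum_{\ell_1,\ell_2\neq 0}|\ell_1\ell_2|^{-1}$, which diverges logarithmically. Overcoming this requires either a second integration by parts (yielding extra factors $|\omega|^{-1}$ that restore summability on $(\ell_1,\ell_2)=(k_1-k_2,k_3-k_2)$) or exploiting the support $|k|\leq n$ of $\alpha_{n,k}$ to truncate $|\ell_i|\leq 2n$ and absorb the resulting $\log n$ into the $n$-decay budget. Executing this while keeping exact track of the resonant cancellation — so that only the quadratic-in-$R$ diagonal term survives after combining with $A(t)$ and the log-modulation — is the delicate core of the argument; the rest of the fixed-point scheme is standard.
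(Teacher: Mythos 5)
Your outline tracks the paper's proof closely in its main architecture: the ansatz, the resonance structure $\Delta_{k,j_1,j_2,j_3}=2(j_1-j_2)(j_2-j_3)$, a single integration by parts in $\tau$ to trade the $1/\tau$ factor for $t/|\Delta|$, the placement of $l^q$ ($q>1$) on two factors and $l^1$ on the third to manufacture the $n^{-(2-2/q)}$ decay, the weight splitting $|k|\leq|j_1-j_2|+|j_2|+|j_3-j_2|$ for the $l^{1,1}$ bound, and reading off the $t\,\partial_t R$ estimates directly from the ODE without a time integration. This is exactly the paper's route.

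However, your diagnosis of the ``main obstacle'' is off. After the integration by parts there is \emph{no} logarithmic divergence to absorb. The paper's estimate \eqref{Young},
$$\Big\|\sum_{(j_1,j_2,j_3)\in NR_k}\frac{M_{j_1}N_{j_2}P_{j_3}}{\Delta_{k,j_1,j_2,j_3}}\Big\|_{l^1_k}
\leq \sum_{j_2\notin\{j_1,j_3\}}\frac{|M_{j_1}||N_{j_2}||P_{j_3}|}{|j_1-j_2|\,|j_3-j_2|}
\leq C_q\,\|M\|_{l^q}\|N\|_{l^q}\|P\|_{l^1},$$
is proved by nested H\"older in $j_1$ then $j_2$: each factor $\sum_{j_1\neq j_2}|M_{j_1}|/|j_1-j_2|$ is controlled by $\|M\|_{l^q}\,\|1/|\cdot|\,\|_{l^{q'}}$, and $q>1$ makes $q'<\infty$ so $\|1/|\cdot|\,\|_{l^{q'}}$ is \emph{finite}. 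The same exponent $q>1$ that produces the $n$-decay through $\|\alpha\|_{l^q}\lesssim n^{1/q-1}$ is what closes the sum; the two issues are resolved simultaneously and there is no residual $\sum|\ell_1\ell_2|^{-1}$. Consequently neither of your proposed remedies is needed, and one of them would actually fail: a second integration by parts spawns new terms where $\partial_\tau$ falls on $R_j(\tau)$ or $\sqrt{\tau}$, multiplying the bookkeeping; and truncating $|\ell_i|\leq 2n$ to absorb a $\log n$ uses compact support, which $\{\alpha_{n,k}\}$ has but $\{R_{n,k}\}$ does not (the convolution constraint $j_1-j_2+j_3=k$ allows the $j_i$ to be unbounded even when $k$ is small, so $R_{n,k}$ is generically nonzero for all $k\in\mathbb Z$). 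The cleaner point you should internalize is that the fixed point is closed in a single ball combining the $l^1$, $l^q$ and $l^{1,1}$ weighted norms with explicit powers of $n$, and the $T^{1-\gamma}$ smallness together with the $n$-decay of $\|\alpha\|_{l^q}$ makes every term self-improving without any log loss.
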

\begin{proof}
We follow the argument in \cite{BV5}, so that we have to find a fixed point for the application $\Phi(\{R_j\})=\{\Phi_k(\{R_j\})\}$ given by 
\begin{equation}\label{eqRk}\Phi_k(\{R_j\})(t)=-i\int_0^tf_k(\tau)d\tau+i\int_0^t\frac 1{8\pi \tau}(|\alpha_k+R_k(\tau)|^2-|\alpha_k|^2)(\alpha_k+R_k(\tau))d\tau,\end{equation}
where
\begin{equation}\label{nonrestpart}f_k(t)=\frac{1}{8\pi t}\sum_{(j_1,j_2,j_3)\in NR_k}e^{-i\frac{\Delta_{k,j_1,j_2,j_3}}{4\tau}}e^{-i\omega_{k,j_1,j_2,j_3}\log\sqrt{t}}(\alpha_{j_1}+R_{j_1}(t))\overline{(\alpha_{j_2}+R_{j_2}(t))}(\alpha_{j_3}+R_{j_3}(t)),\end{equation}
and
$$\omega_{k,j_1,j_2,j_3}=\frac{|\alpha_k|^2-|\alpha_{j_1}|^2+|\alpha_{j_2}|^2-|\alpha_{j_3}|^2}{4\pi},\quad \Delta_{k,j_1,j_2,j_3}=k^2-j_1^2+j_2^2-j_3^2,$$
$$NR_k=\{(j_1,j_2,j_3),j_1-j_2+j_3=k,\Delta_{k,j_1,j_2,j_3}\neq 0\},$$
see for instance (24) in \cite{BV5}; for simplicity we have ommited the $n-$subindex. We shall perform the fixed point argument in the ball
$$X^{\gamma,q,n}:=\{\{M_k\}\in\mathcal C^1((0,T),l^1)\cap \mathcal C((0,T),l^{1,1}),\quad \|\{M_k\}\|_{X^{\gamma,q,n}}<\delta\},$$
where
$$ \|\{M_k\}\|_{X^{\gamma,q,n}}:=n^{2-\frac 2q}\sup_{t\in(0,T)}\|t^{-\gamma}R_k(t)\|_{l^1}+\sup_{1\leq \tilde q\leq q} n^{1-\frac 1{\tilde q}}\sup_{t\in(0,T)}\|t\,\partial_t R_k(t)\|_{l^{\tilde q}}$$
$$+n^{1-\frac 2q}\sup_{t\in(0,T)}\|t^{-\gamma}\,R_{n,k}(t)\|_{l^{1,1}},$$
and $T\in(0,1)$ will be specified later.\\

Let $\{R_j\}\in X^{\gamma,q,n}$. 
We start with the estimates of $\|\Phi_k(\{R_j\})(t)\|_{l^1}$.  
To estimate the first term in the expression \eqref{eqRk} we shall perform as in (36) in \cite{BV5} an integration by parts in time to get advantage of the non-resonant phase $\Delta_{k,j_1,j_2,j_3}$ and to obtain integrability in time:
%advantage of the non-resonant phase. Without the phase gain we have an issue for the integration in time. We obtain:
\begin{equation}\label{fk}i\int_0^t f_k(\tau)d\tau=t\sum_{(j_1,j_2,j_3)\in NR_k}\frac{e^{-i\frac{\Delta_{k,j_1,j_2,j_3}}{4\tau}}e^{-i\omega_{k,j_1,j_2,j_3}\log\sqrt{t}}}{2\pi \Delta_{k,j_1,j_2,j_3}}(\alpha_{j_1}+R_{j_1}(t))\overline{(\alpha_{j_2}+R_{j_2}(t))}(\alpha_{j_3}+R_{j_3}(t))
\end{equation}
$$-\int_0^t \sum_{(j_1,j_2,j_3)\in NR_k}\frac{e^{-i\frac{\Delta_{k,j_1,j_2,j_3}}{4\tau}}}{2\pi\Delta_{k,j_1,j_2,j_3}} \partial_\tau(e^{-i\omega_{k,j_1,j_2,j_3}\log\sqrt{t}}\tau (\alpha_{j_1}+R_{j_1}(\tau))\overline{(\alpha_{j_2}+R_{j_2}(\tau))}(\alpha_{j_3}+R_{j_3}(\tau)))\,d\tau.$$
We shall exploit the decay given by $\Delta_{k,j_1,j_2,j_3}=2(j_1-j_2)(j_3-j_2)$ on $NR_k$ %(see for instance Remark 2.2. in \cite{BV5}) 
yielding for $1\leq q<\infty$ the estimate
%s $$\left\|\sum_{(j_1,j_2,j_3)\in NR_k}\frac{M_{j_1}N_{j_2}P_{j_3}}{\Delta_{k,j_1,j_2,j_3}}\right\|_{l^q_k}\leq C \|\{M_j\}\|_{l^q}\|\{N_j\}\|_{l^q}\|\{P_j\}\|_{l^q},$$
%and
%$$\left\|\sum_{(j_1,j_2,j_3)\in NR_k}\frac{M_{j_1}N_{j_2}P_{j_3}}{\Delta_{k,j_1,j_2,j_3}}\right\|_{l^1_k}\leq C \|\{M_j\}\|_{l^q}\|\{N_j\}\|_{l^q}\|\{P_j\}\|_{l^1},$$
%obtained by performing H\"older estimates in the $j_1,j_2$ variables:
% In particular, for all $1\leq p\leq q<\infty$ we have
%\begin{equation}\label{Young}
%\left\|\sum_{(j_1,j_2,j_3)\in NR_k}\frac{M_{j_1}N_{j_2}P_{j_3}}{\Delta_{k,j_1,j_2,j_3}}\right\|_{l^p_k}\leq C \|\{M_j\}\|_{l^q}\|\{N_j\}\|_{l^q}\|\{P_j\}\|_{l^p}.
%\end{equation}
\begin{equation}\label{Young}
\left\|\sum_{(j_1,j_2,j_3)\in NR_k}\frac{M_{j_1}N_{j_2}P_{j_3}}{\Delta_{k,j_1,j_2,j_3}}\right\|_{l^1_k}\leq \sum_{j_2\notin\{j_1,j_3\}} \left|\frac{M_{j_1}N_{j_2}P_{j_3}}{(j_1-j_2)(j_3-j_2)}\right|
\leq C_q \|M_j\|_{l^q}\|N_j\|_{l^q}\|P_j\|_{l^1},
\end{equation}
obtained by performing H\"older estimates in the $j_1,j_2$ variables. Similarly, we get also as upper-bounds $C \|M_j\|_{l^q}\|N_j\|_{l^1}\|P_j\|_{l^q}$ and $C \|M_j\|_{l^1}\|N_j\|_{l^q}\|P_j\|_{l^q}$. Therefore
\begin{equation}\label{fkest}\|\int_0^t f_k(\tau)d\tau\|_{l^1}\leq Ct
(1+\|\alpha_j\|_{l^\infty}^2)
(\|\alpha_j\|_{l^q}^2\|\alpha_j\|_{l^1}+\sup_{\tau\in(0,T)}\|R_{j}(\tau))\|_{l^1}(\|\alpha_j\|_{l^1}^2+\sup_{\tau\in(0,T)}\|R_{j}(\tau))\|_{l^1}^2))\end{equation}
%$$+Ct(\|\alpha_{j}\|_{l^p}^2+\sup_{\tau\in(0,T)}\|R_{j}(\tau))\|_{l^p}^2)\sup_{\tau\in(0,T)}\|\tau \partial_\tau R_{j}(\tau))\|_{l^p}.$$
$$+Ct(\|\alpha_{j}\|_{l^q}^2+\sup_{\tau\in(0,T)}\|R_{j}(\tau))\|_{l^q}^2)\sup_{\tau\in(0,T)}\|\tau \partial_\tau R_{j}(\tau))\|_{l^1}.$$
The second term in \eqref{eqRk} contains only cubic terms with at least a power of $R_k$ so we conclude that for all $1\leq q<\infty$, as $l^q\subset l^1$,
\begin{equation}\label{Rkest}\|\Phi_k(\{R_j\})(t)\|_{l^1}\leq Ct
(1+\|\alpha_j\|_{l^\infty}^2)
(\|\alpha_j\|_{l^q}^2\|\alpha_j\|_{l^1}+\sup_{\tau\in(0,T)}\|R_{j}(\tau))\|_{l^1}(\|\alpha_j\|_{l^1}^2+\sup_{\tau\in(0,T)}\|R_{j}(\tau))\|_{l^1}^2))\end{equation}
%$$+Ct(\|\alpha_{j}\|_{l^p}^2+\sup_{\tau\in(0,T)}\|R_{j}(\tau))\|_{l^p}^2)\sup_{\tau\in(0,T)}\|\tau \partial_\tau R_{j}(\tau))\|_{l^p}$$
$$+Ct(\|\alpha_{j}\|_{l^q}^2+\sup_{\tau\in(0,T)}\|R_{j}(\tau))\|_{l^1}^{2})\sup_{\tau\in(0,T)}\|\tau \partial_\tau R_{j}(\tau))\|_{l^1}$$
$$+C t^\gamma \sup_{\tau\in(0,T)}\|\tau^{-\gamma}R_{j}(\tau))\|_{l^1}(\|\alpha_j\|_{l^\infty}^2+\sup_{\tau\in(0,T)}\|R_{j}(\tau))\|_{l^\infty}^2).$$
In particular, as  $\|\alpha_j\|_{l^\infty}\leq \frac Cn, \|\alpha_j\|_{l^1}\leq \frac C{n^{1}}, \|\alpha_j\|_{l^q}\leq \frac{C}{n^{1-\frac 1 q}}$, we have
\begin{equation}\label{Rkest2}\sup_{\tau\in(0,T)}\|\tau^{-\gamma}\Phi_k(\{R_j\})(\tau)\|_{l^1}\leq CT^{1-\gamma}(\frac1{n^{2(1-\frac 1q)}}+\frac{T^{\gamma}\delta}{n^{2(1-\frac 1q)}}(1+\frac{T^{2\gamma}\delta^2}{n^{4(1-\frac 1q)}}))\end{equation}
%$$+Ct(\|\alpha_{j}\|_{l^p}^2+\sup_{\tau\in(0,T)}\|R_{j}(\tau))\|_{l^p}^2)\sup_{\tau\in(0,T)}\|\tau \partial_\tau R_{j}(\tau))\|_{l^p}$$
$$+CT^{1-\gamma}(\frac1{n^{2(1-\frac 1q)}}+\frac{T^{2\gamma}\delta^{2}}{n^{2(1-\frac1q)}})\delta+C  \frac{\delta}{n^{2(1-\frac 1q)}}(\frac 1{n^2}+\frac{T^{2\gamma}\delta^2}{n^{4(1-\frac 1q)}}).$$
So for $T$ and $\delta$ less than a constant depending only on $\gamma$ and $q$ we have
$$n^{2(1-\frac 1q)}\sup_{\tau\in(0,T)}\|\tau^{-\gamma}\Phi_k(\{R_j\})(\tau)\|_{l^1}\leq \frac{\delta}{3}.$$

\bigskip 

Now we shall get estimates on $\partial_\tau \Phi_k(\{R_j\})(\tau)$. As we have for all $1\leq \tilde q$
$$\left\|\sum_{(j_1,j_2,j_3)\in NR_k}M_{j_1}N_{j_2}P_{j_3}\right\|_{l^{\tilde q}_k}\leq \|\{M_j\}\star \{N_j\}\star\{P_j\}(k)\|_{l^{\tilde q}_k}\leq C \|M_j\|_{l^1}\|N_j\|_{l^1}\|P_j\|_{l^{\tilde q}},$$
we get from \eqref{eqRk}
$$\sup_{\tau\in(0,T)}\|\tau\,\partial_\tau \Phi_k(\{R_j\})(\tau)\|_{l^{\tilde q}}\leq C \|\alpha_j+R_j(\tau)\|_{l^1}^2\|\alpha_j+R_j(\tau)\|_{l^{\tilde q}}$$
$$+C \|R_{j}(t))\|_{l^{\tilde q}}(\|\alpha_j\|_{l^\infty}^2+\sup_{\tau\in(0,T)}\|R_{j}(\tau)\|_{l^\infty}^2).$$
Hence for $1\leq \tilde q$, as $l^{\tilde q}\subset l^1$,
\begin{equation}\label{Rktest2}\sup_{\tau\in(0,T)}\|\tau\,\partial_\tau \Phi_k(\{R_j\})(\tau)\|_{l^{\tilde q}}\leq C(1+\frac{T^{2\gamma}\delta^{2}}{n^{4(1-\frac 1q)}}) (\frac1{n^{1-\frac 1{\tilde q}}}+\frac{T^\gamma\delta}{n^{2(1-\frac 1q)}})+C\frac{T^{\gamma}\delta}{n^{2(1-\frac 1q)}}(\frac 1{n^2}+\frac{T^{2\gamma}\delta^{2}}{n^{4(1-\frac 1q)}}).\end{equation}
Therefore, again for 
$T$ and $\delta$ less than a constant depending only on $\gamma$ and $q$ we have 
$$\sup_{1\leq\tilde q\leq q}n^{1-\frac 1{\tilde q}}\sup_{\tau\in(0,T)}\|\tau\partial_\tau \Phi_k(\{R_j\})(\tau)\|_{l^{\tilde q}}\leq \frac{\delta}{3}.$$

Finally, the control of the weighted norm $\|\{ \Phi_k(\{R_j\})(\tau)\|_{l^{1,1}}$ is obtained similarly, by using weighted estimates of type
$$\left\|\sum_{(j_1,j_2,j_3)\in NR_k}\frac{M_{j_1}N_{j_2}P_{j_3}}{\Delta_{k,j_1,j_2,j_3}}\right\|_{l^{1,1}}=\sum_k\left|\sum_{(j_1,j_2,j_3)\in NR_k}\frac{M_{j_1}N_{j_2}P_{j_3}}{\Delta_{k,j_1,j_2,j_3}}k\right|$$
$$\leq \sum_{j_2\notin\{j_1,j_3\}} \left|\frac{M_{j_1}N_{j_2}P_{j_3}(j_1-j_2+j_3)}{(j_1-j_2)(j_3-j_2)}\right|$$
$$\leq C \|M_j\|_{l^q}\|N_j\|_{l^q}\|P_j\|_{l^{1,1}}+C \|M_j\|_{l^1}\|N_j\|_{l^{\tilde q}}\|P_j\|_{l^{1}},$$
for all $1\leq\tilde q<\infty$. We get in the same way:
\begin{equation}\label{Rkest4}\sup_{\tau\in(0,T)}\|\tau^{-\gamma}\Phi_k(\{R_{j}\})(\tau)\|_{l^{1,1}}\leq CT^{1-\gamma}
(1+\|\alpha_j\|_{l^\infty}^2)
(\|\alpha_j\|_{l^q}^2\|\alpha_j\|_{l^{1,1}}+\|\alpha_j\|_{l^q}\|\alpha_j\|_{l^1}^2\end{equation}
%$$+Ct(\|\alpha_{j}\|_{l^p}^2+\sup_{\tau\in(0,T)}\|R_{j}(\tau))\|_{l^p}^2)\sup_{\tau\in(0,T)}\|\tau \partial_\tau R_{j}(\tau))\|_{l^p}$$
$$+\sup_{\tau\in(0,T)}(\|R_{j}(\tau))\|_{l^{1,1}}+\|R_{j}(\tau))\|_{l^1})(\|\alpha_j\|_{l^1}^2+\sup_{\tau\in(0,T)}\|R_{j}(\tau))\|_{l^1}^2))$$
%$$+\sup_{\tau\in(0,T)}\|R_{j}(\tau))\|_{l^1}(\|\alpha_j\|_{l^q}\|\alpha_j\|_{l^1}+\|\alpha_j\|_{l^q}\sup_{\tau\in(0,T)}\|R_{j}(\tau))\|_{l^1}+\|\alpha_j\|_{l^1}\sup_{\tau\in(0,T)}\|R_{j}(\tau))\|_{l^q}+\sup_{\tau\in(0,T)}\|R_{j}(\tau))\|_{l^1}\|R_{j}(\tau))\|_{l^q}))$$
$$+CT^{1-\gamma}(\|\alpha_{j}\|_{l^q}\|\alpha_{j}\|_{l^{1,1}}\sup_{\tau\in(0,T)}\|\tau \partial_\tau R_{j}(\tau))\|_{l^q}+\|\alpha_{j}\|_{l^q}\|\alpha_{j}\|_{l^{1}}\sup_{\tau\in(0,T)}\|\tau \partial_\tau R_{j}(\tau))\|_{l^1})$$
$$+((\|\alpha_{j}\|_{l^{1,1}}+\|\alpha_j\|_{l^1})\sup_{\tau\in(0,T)}\|R_{j}(\tau))\|_{l^1}+\|\alpha_j\|_{l^1}\sup_{\tau\in(0,T)}\|R_{j}(\tau))\|_{l^{1,1}})\sup_{\tau\in(0,T)}\|\tau \partial_\tau R_{j}(\tau))\|_{l^1}$$
$$+(\sup_{\tau\in(0,T)}\|R_{j}(\tau))\|_{l^1}^2+\sup_{\tau\in(0,T)}\|R_{j}(\tau))\|_{l^1}\sup_{\tau\in(0,T)}\|R_{j}(\tau))\|_{l^{1,1}})\sup_{\tau\in(0,T)}\|\tau \partial_\tau R_{j}(\tau))\|_{l^1}$$
$$+C \sup_{\tau\in(0,T)}\|\tau^{-\gamma}R_{j}(\tau))\|_{l^{1,1}}(\|\alpha_j\|_{l^\infty}^2+\sup_{\tau\in(0,T)}\|R_{j}(\tau))\|_{l^\infty}^2).$$
%so $$\sup_{\tau\in(0,T)}\|\tau^{-\gamma}\Phi_k(\{R_{j}\})(\tau)\|_{l^{1,1}}\leq CT^{1-\gamma}(\frac1{n^{1-\frac 2q}}+\frac1{n^{1-\frac 1q}}$$$$+(\frac{T^\gamma\delta}{n^{1-\frac 2q}}+\frac{T^\gamma\delta}{n^{2(1-\frac 1q)}})(1+\frac{T^{2\gamma}\delta^2}{n^{4(1-\frac 1q)}}))$$$$+CT^{1-\gamma}(\frac{1}{n^{-\frac 1q}}\frac{\delta^q}{n^{q(1-\frac 1q)}}+\frac{1}{n^{1-\frac 1q}}\delta)$$$$+(n \frac{T^\gamma\delta}{n^{2(1-\frac 1q)}}+\frac{T^\gamma\delta}{n^{1-\frac 2q}})\delta$$$$+(\frac{T^{2\gamma}\delta^2}{n^{4(1-\frac 1q)}}+\frac{T^{\gamma}\delta}{n^{2(1-\frac 1q)}}\frac{T^{\gamma}\delta}{n^{1-\frac 2q}})\delta$$$$+C \frac{\delta}{n^{1-\frac 2q}}(\frac 1{n^2}+\frac{T^{2\gamma}\delta^2}{n^{4(1-\frac 1q)}}).$$
Thus again for 
$T$ and $\delta$ less than a constant depending only on $\gamma$ and $q$ we have
$$n^{1-\frac 2q}\sup_{\tau\in(0,T)}\|\tau^{-\gamma}\Phi_k(\{R_{j}\})(\tau)\|_{l^{1,1}}\leq \frac{\delta}{3}.$$

Summarizing, we have obtained the existence of $T$ and $\delta$ less than a constant depending only on $\gamma$ and $q$ such that the stability estimate holds : if 
$\{R_k\}\in X^{\gamma,q,n}$ then $ \{\Phi_k(\{R_{j}\})\}\in X^{\gamma,q,n}.$
Thus to end the fixed point argument we need only the contraction estimates, that can be obtained in the same way.

\end{proof}

\section{Proof of the second part of Theorem \ref{th} in the planar case}\label{sectionRiemm}
Let us first recall from \S 4.3 in \cite{BV5} that for constructing the solutions of BF using the parallel frame $(T,e_1,e_2)$ the following equations have to be solved:
\begin{equation}
\label{Tx}
T_x=\Re u \,e_1+\Im u \,e_2=\Re(\overline u\, N),
\end{equation}

\begin{equation}
\label{Nx}N_x=e_{1x}+ie_{2x}=-\Re u\, T-i\Im u \,T=-u\, T,
\end{equation}

\begin{equation}
\label{Tt}T_t=-\Im u_x\, e_1+\Re u_x\,e_2=\Im (\overline{u_x}\,N),
\end{equation}

\begin{equation}
\label{Nt}N_t=-iu_x\, T+i\left(\frac{|u|^2}{2}-\frac{\sum_{k\in\mathbb Z}|\alpha_k|^2}{2t}\right) N,
\end{equation}

\begin{equation}
\label{chit}\chi_t=T\wedge T_x=T\wedge\Re(\overline{u}\,N)=\Im(\overline{u}\, N).
\end{equation}
Above we have taken
\begin{equation}\label{u}
u(t,x)=\sum_je^{-i(|\alpha_{j}|^2-\sum_{k\in\mathbb Z}|\alpha_k|^2)\log \sqrt{t}}(\alpha_{j}+R_{j}(t))\frac{e^{i\frac{(x-j)^2}{4t}}}{\sqrt{t}},
\end{equation}
$N=e_1+ie_2$, and we have omitted the $n-$subindices for simplicity. We note that the ansatz \eqref{u} comes from the one given in Proposition \ref{lemmaRk} applied to the sequence $\{\sqrt{4\pi i}\alpha_n\}$ instead of $\{\alpha_n\}$, and thus the notation $R_{j}$ in \eqref{u} comes from the remainder term in Proposition \ref{lemmaRk} divided by $\sqrt{4\pi i}$. Thus the remainder term in \eqref{u} enjoys the same decay properties \eqref{Rkestn}-\eqref{Rkestnw}.\par

In view of \eqref{chit} we can write the evolution of the corner located at $x=0$ as
$$\chi_n(t,0)-\chi_n(0,0)=\int_0^{t}\Im(\overline{u_n}N_n(\tau,0))\,d\tau$$
$$=\Im\int_{0}^{t}\sum_je^{i(|\alpha_{n,j}|^2-\sum_{k\in\mathbb Z}|\alpha_{n,k}|^2)\log \sqrt{\tau}}(\overline{\alpha_{n,j}+R_{n,j}(\tau)})\frac{e^{-i\frac{j^2}{4\tau}}}{\sqrt{\tau}}\,N_n(\tau,0)\,d\tau.$$
By using Proposition \ref{lemmaRk} the term involving $R_{n,j}(\tau)$ yields decay in $n$.
Therefore
$$\chi_n(t,0)-\chi_n(0,0)=\Im\int_{0}^{t}\sum_{|j|\leq n^\nu}\overline{\alpha_{n,j}}\frac{e^{-i\frac{j^2}{4\tau}}}{\sqrt{\tau}}\,e^{-i\sum_{k\neq j}|\alpha_{n,k}|^2\log \sqrt{\tau}}N_n(\tau,0)\,d\tau+r_n(t),
$$
with
\begin{equation}\label{rem}
|r_n(t)|\leq \frac{C}{n^{2^-}},\quad\forall t\in(0,T).
\end{equation}
Here $2^-$ means any number smaller than $2$, on which the constant depend. 

From Lemma 4.5 in \cite{BV5} we get the existence of the limit
$$\underset{t\rightarrow 0}{\lim}\,e^{i\sum_{j\neq x}|\alpha_{n,j}|^2\log\frac{|x-j|}{\sqrt t}}N_n(t,x)=:\tilde N_n(0,x)\in\mathbb S^2+i\mathbb S^2.$$
Hence we can write:
\begin{equation}\label{eq1}
\chi_n(t,0)-\chi_n(0,0)=\Im (e^{-ic_n^2\sum_{1\leq |j|\leq n^\nu}\log|j|} \tilde N_n(0,0)\int_{0}^{t}\sum_{|j|\leq n^\nu}\overline{\alpha_{n,j}}\frac{e^{-i\frac{j^2}{4\tau}}}{\sqrt{\tau}}\,d\tau)
\end{equation}
$$+\Im (e^{-ic_n^2\sum_{1\leq |j|\leq n^\nu}\log|j|}\int_{0}^{t}\sum_{|j|\leq n^\nu}\overline{\alpha_{n,j}}\frac{e^{-i\frac{j^2}{4\tau}}}{\sqrt{\tau}}g_n(\tau)\,d\tau)+r_n(t),$$
where
%$$c_n=e^{-i|\alpha_n|^2\sum_{j\neq 0, |j|\leq n}\log|j|}=1+o(1),$$
%as $1\approx\|\{\alpha_{n,k}\}\|_{l^1}=(2n+1)|\alpha_n|$. 
%Let us denote
\begin{equation}\label{defgn}
g_{n}(t)=e^{i\Phi_n(t)}N_n(t,0)-\tilde N_n(0,0),\quad \Phi_n(t)=\sum_{j\in\mathbb Z}|\alpha_{n,j}|^2\log\frac{|j|}{\sqrt{t}}=c_n^2\sum_{1\leq |j|\leq n^\nu}\log\frac{|j|}{\sqrt{t}}.
\end{equation}

As we are in the case $\alpha_{n,j}=c_n$ for $|j|\leq n^\nu$, the first term makes appear the Riemann's function as follows.
\begin{lemma}\label{lemmaR}
$$|\int_{0}^{t}\sum_{|j|\leq n^\nu}\frac{e^{-i\frac{j^2}{4\tau}}}{\sqrt{\tau}}\,d\tau-\frac{e^{-i\frac\pi 4}}{2\pi\sqrt{\pi}}\,\frak{R}(4\pi^2 t)|\leq\frac{C}{n^\nu},$$
uniformly on $(0,T)$.
\end{lemma}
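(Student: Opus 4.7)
The plan is to reduce Lemma~\ref{lemmaR} to a modular-type identity via Poisson's summation formula and then control the truncation error by an oscillatory integration by parts. First I would apply \eqref{P} to the full series $\sum_{j\in\mathbb Z}e^{-ij^2/(4\tau)}$, the required Fourier transform being a standard Fresnel integral: completing the square in $\int_{\mathbb R}e^{-ix^2/(4\tau)}e^{-2\pi ikx}\,dx$ produces the quadratic phase $e^{4\pi^2ik^2\tau}$ together with the prefactor $2\sqrt{\pi\tau}\,e^{-i\pi/4}$. This yields the theta inversion
$$\frac{1}{\sqrt{\tau}}\sum_{j\in\mathbb Z}e^{-ij^2/(4\tau)}=2\sqrt{\pi}\,e^{-i\pi/4}\sum_{k\in\mathbb Z}e^{4\pi^2ik^2\tau}.$$
Integrating in $\tau$ on $(0,t)$ and applying the change of variable $s=4\pi^2\tau$ inside the definition \eqref{1.0} of $\frak R$ turns the right-hand side exactly into $\frac{e^{-i\pi/4}}{2\pi\sqrt{\pi}}\,\frak R(4\pi^2t)$, matching the constant in the statement.

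It then suffices to show that the tail
$$E_n(t):=\int_0^t\tau^{-1/2}\sum_{|j|>n^\nu}e^{-ij^2/(4\tau)}\,d\tau$$
is $O(1/n^\nu)$ uniformly on $(0,T)$. I expect this to be the main work, since the trivial bound $|e^{-ij^2/(4\tau)}|=1$ gives no decay and one must instead exploit the oscillation of the phase in $\tau$. For each $j\neq 0$ I would write $e^{-ij^2/(4\tau)}=\frac{4\tau^2}{ij^2}\,\partial_\tau e^{-ij^2/(4\tau)}$ and integrate by parts once in $\tau$. The boundary contribution at $\tau=0$ vanishes because $\tau^{3/2}\to 0$ while the exponential stays of modulus one; what survives is a boundary term of size $t^{3/2}/j^2$ at $\tau=t$ and a remainder integral that is again $O(t^{3/2}/j^2)$ (using $\int_0^t\tau^{1/2}\,d\tau\lesssim t^{3/2}$). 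Summing the bound $Ct^{3/2}/j^2$ over $|j|>n^\nu$ yields $|E_n(t)|\leq C t^{3/2}/n^\nu$, which is the required control uniformly in $t\in(0,T)$.

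One minor subtlety is that the theta series $\sum_j e^{-ij^2/(4\tau)}$ does not converge absolutely, so the Poisson step must be justified with care. I would either regularize with a Gaussian $e^{-\varepsilon x^2}$ and pass to the limit $\varepsilon\to 0^+$ in the Fresnel integral, or invoke the classical Jacobi inversion $\sum e^{-\pi n^2/s}=\sqrt{s}\sum e^{-\pi k^2 s}$ valid for $\Re s>0$ and take boundary values as $s\to -i/(4\pi\tau)$. Either route reduces the task to the oscillatory integration by parts outlined above, which carries the essential content of the lemma.
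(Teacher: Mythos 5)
Your proof is correct and follows essentially the same approach as the paper's: the same integration by parts in $\tau$ to handle the truncation error (the paper also writes $e^{-ij^2/(4\tau)}$ as $\frac{4\tau^2}{ij^2}\partial_\tau e^{-ij^2/(4\tau)}$ and discards the vanishing boundary term at $\tau=0$), and the same Poisson/theta inversion identity to produce $\mathfrak{R}(4\pi^2 t)$. The only cosmetic difference is the order of the two steps and the direction in which Poisson is applied (the paper starts from $\sum_j e^{i4\pi^2 t j^2}$ and lands on $\sum_j e^{-ij^2/(4t)}$, you go the other way), which is the same identity.
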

\begin{proof}
We first replace the summation in $j$ over the whole set of integers. Indeed, by integration by parts we have%\footnote{note that this computations work well for "middle" corners as $x=0$ here which is in between those located at $-n,...,n$. If we look at a "last" corner, for instant at the corner located at $x=0$ when corners are located at $0,...,2n+1$ then we have also to estimate terms as$$\left|\Im\,c_n\overline{\alpha_{n}}\tilde N_n(0,0)\int_0^t \frac{e^{in|\alpha_n|^2\log \sqrt{\tau}}}{\sqrt{\tau}}\,d\tau\right|=\left|\Im\,c_n\overline{\alpha_{n}}\tilde N_n(0,0)\frac{2\sqrt{t}\,e^{in|\alpha_n|^2\log \sqrt{t}}}{1+in|\alpha_n|^2}\right|\leq |\alpha_n|\sqrt{t},$$which are $C^{0,\frac 12}$ regularity and multiplied by $n$ only $O(1)$.}
$$|\int_0^t \sum_{|j|>n^\nu}\frac{e^{-i\frac{j^2}{4\tau}}}{\sqrt{\tau}}\,d\tau|=\left|\left[-\sum_{|j|>n^\nu}\frac{i4\tau\sqrt{\tau}e^{-i\frac{j^2}{4\tau}}}{j^2}\right]_0^t+\int_0^t \sum_{|j|>n^\nu}\frac{i6\sqrt{\tau}e^{-i\frac{j^2}{4\tau}}}{j^2}\,d\tau\right|\leq \frac{C}{n^\nu}.$$
Now we shall use Poisson's summation formula $\sum_{j\in\mathbb Z}f(j)=\sum_{j\in\mathbb Z}\hat f(2\pi j)$: 
%({\color{red}{we must replace $|x-j|$ by $|j|$ either in the Poisson formula or in the above integral. }})
$$\sum_{j\in\mathbb Z}e^{i 4\pi^2 tj^2}=\sum_{j\in\mathbb Z}\int e^{-i2\pi xj+i4\pi^2 tx^2}dx=\frac 1{\sqrt{4\pi^2 t}}\sum_{j\in\mathbb Z}\int e^{-iy\frac j{\sqrt{t}}+iy^2}dy$$
$$=\frac 1{2\pi \sqrt{ t}}\sum_{j\in\mathbb Z}\widehat{e^{i\cdot^2}}(\frac j{\sqrt{t}})=\frac {e^{i\frac \pi 4}}{2\sqrt{\pi}\sqrt{t}}\sum_{j\in\mathbb Z}e^{- i\frac{j^2}{4t}},$$
and the statement follows after integration in time.
\end{proof}
In view of this result, of \eqref{eq1} and \eqref{rem} we obtain, as $0<\nu\leq 1$,
$$n(\chi_n(t,0)-\chi_n(0,0))-\Im\,(nc_n\,e^{-ic_n^2\sum_{1\leq |j|\leq n^\nu}\log|j|}\tilde N_n(0,0)\frac{e^{-i\frac \pi 4}}{2\pi\sqrt{\pi}}\,\frak{R}(4\pi^2 t))$$
$$-\Im (nc_ne^{-ic_n^2\sum_{1\leq |j|\leq n^\nu}\log|j|}\int_{0}^{t}\sum_{|j|\leq n^\nu}\frac{e^{-i\frac{j^2}{4\tau}}}{\sqrt{\tau}}g_n(\tau)\,d\tau)\overset{n\rightarrow\infty}\longrightarrow 0,$$
uniformly on $(0,T)$. We note that if we use Lemma 4.5 of \cite{BV5}, the best estimate on $g_n$ we get involves one $l^{1,1}$ norm and a power of the $l^1$ norm of $\{\alpha_j\}$. In the present context this gives an undesired growth in $n$. Instead of that, we shall use Proposition \ref{propgn} to get
$$n(\chi_n(t,0)-\chi_n(0,0))-\Im\,(nc_n\,e^{-ic_n^2\sum_{1\leq |j|\leq n^\nu}\log|j|}\tilde N_n(0,0)\frac{e^{-i\frac \pi 4}}{2\pi\sqrt{\pi}}\,\frak{R}(4\pi^2 t))\overset{n\rightarrow\infty}\longrightarrow 0.$$
By using the convergence \eqref{c_n} of $c_n$ and the convergence of $\tilde N_n(0,0)$ obtained in Proposition \ref{cvN}:
$$\lim_{n\rightarrow\infty}\tilde N_n(0,0)=(0,\frac{1-i}{\sqrt{2}},\frac {-1-i}{\sqrt{2}}),$$
we have
$$n(\chi_n(t,0)-\chi_n(0,0))-\Im\,(\Gamma(0,-i,-1)\frac{\frak{R}(4\pi^2 t)}{4\pi^2})\,\overset{n\rightarrow\infty}\longrightarrow 0,$$
and Theorem \ref{th} follows.

\section{A convergence estimate for the normal vectors}\label{sectiongn}

\begin{prop}\label{propgn} Let $g_n$ be as defined in \eqref{defgn}. Then 
$$\int_{0}^{t}\sum_{|j|\leq n^\nu}\frac{e^{-i\frac{j^2}{4\tau}}}{\sqrt{\tau}}g_n(\tau)\,d\tau\overset{n\rightarrow\infty}\longrightarrow 0,$$
uniformly on $(0,T)$.
\end{prop}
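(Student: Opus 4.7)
Since a direct $L^\infty_\tau$ estimate on $g_n$ via Lemma~4.5 of \cite{BV5} produces undesired growth in $n$, as noted in the paragraph preceding Proposition~\ref{propgn}, my plan is first to integrate by parts in $\tau$ so as to trade the oscillation of $e^{-ij^2/(4\tau)}$ for summability in $j$, and only afterwards to estimate $g_n$ and $g_n'$ through the evolution equation inherited from \eqref{Nt}.

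For each $j\neq 0$, writing $e^{-ij^2/(4\tau)}=\frac{4\tau^2}{ij^2}\partial_\tau e^{-ij^2/(4\tau)}$ and integrating by parts in $\tau$ gives
$$\int_0^t\frac{e^{-ij^2/(4\tau)}}{\sqrt{\tau}}g_n(\tau)\,d\tau=\frac{4t^{3/2}g_n(t)e^{-ij^2/(4t)}}{ij^2}-\frac{4}{ij^2}\int_0^t\Bigl(\tfrac{3}{2}\tau^{1/2}g_n(\tau)+\tau^{3/2}g_n'(\tau)\Bigr)e^{-ij^2/(4\tau)}\,d\tau,$$
the $\tau=0$ boundary term vanishing by boundedness of $g_n$. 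Summing over $1\leq|j|\leq n^\nu$ with $\sum_{j\neq 0}1/j^2<\infty$, together with the $j=0$ contribution $\int_0^t g_n(\tau)/\sqrt{\tau}\,d\tau$, reduces the proposition to proving that $\sup_{0<t<T}|g_n(t)|\to 0$ and $\int_0^T(\tau^{1/2}|g_n(\tau)|+\tau^{3/2}|g_n'(\tau)|)\,d\tau\to 0$ as $n\to\infty$.

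To obtain these, I differentiate \eqref{defgn} and use \eqref{Nt}, which yields
$$g_n'(t)=ie^{i\Phi_n(t)}\Bigl[\Bigl(\Phi_n'(t)+\tfrac{|u(t,0)|^2}{2}-\tfrac{\sum_k|\alpha_{n,k}|^2}{2t}\Bigr)N_n(t,0)-u_x(t,0)T(t,0)\Bigr].$$
The diagonal part of $|u(t,0)|^2/2$ equals $\sum_j|\alpha_{n,j}+R_{n,j}(t)|^2/(2t)$, which by the mass conservation \eqref{mass} coincides exactly with $\sum_k|\alpha_{n,k}|^2/(2t)$; this cancels the third term in the bracket up to the off-diagonal cross terms of $|u(t,0)|^2/2$. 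Because $|\alpha_{n,j}|\equiv c_n$ on $|j|\leq n^\nu$, the phase $e^{-i(|\alpha_{n,j_1}|^2-|\alpha_{n,j_2}|^2)\log\sqrt{t}}$ inside $u$ collapses, leaving purely non-resonant oscillatory phases $e^{i(j_1^2-j_2^2)/(4t)}$ with $j_1\neq j_2$; similarly $u_x(t,0)$ has the analogous structure with phases $e^{ij^2/(4t)}$. A further integration by parts in $t$, in the spirit of \eqref{fk}, against these phases with denominators $(j_1-j_2)(j_1+j_2)$ and $j^2$, reduces the oscillatory sums to convolution-type expressions controlled by \eqref{Young} together with the $l^1$, $l^q$ and $l^{1,1}$ bounds \eqref{Rkestn}--\eqref{Rkestnw} of Proposition~\ref{lemmaRk} and the scaling $c_n\lesssim 1/n$. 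This should yield $|g_n'(t)|\leq\epsilon_n/t$ with $\epsilon_n\to 0$; integrating from $0$ using $g_n(0)=0$ then gives $|g_n(t)|\leq\epsilon_n$ uniformly on $(0,T)$, which closes the argument.

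The main technical obstacle is this second integration by parts: the denominator $j_1^2-j_2^2=(j_1-j_2)(j_1+j_2)$ degenerates along the resonant line $j_1=-j_2$, which must be isolated and treated separately, and the residual combination of $\Phi_n'(t)$ with the off-diagonal cross terms of $|u(t,0)|^2/2$ must be re-summed carefully so that no $n^\nu/t$-type contribution survives. This requires tracking the precise $n$-dependence of every weighted norm of $\{\alpha_{n,k}\}$ and $\{R_{n,k}\}$ at each step.
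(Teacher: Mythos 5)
Your opening move---integrating by parts once in $\tau$ against $e^{-ij^2/(4\tau)}$ to trade the $j$-sum for $\sum 1/j^2$---is reasonable, and a similar $\tau$-integration by parts does appear in the paper's proof (though at the end of each branch, to gain summability in $j$ ``without loss in $n$''). The decomposition of $g_n'$ via \eqref{Nt} and the use of the mass conservation \eqref{mass} to cancel the diagonal piece against the $-\frac{\sum_k|\alpha_k|^2}{2t}$ term are also genuinely present in the paper, where the diagonal and anti-diagonal resonances land in the term $K$ and disappear thanks to the specific $A(t)$.

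The argument breaks down, however, at the step where you aim for a pointwise bound $|g_n'(t)|\le \epsilon_n/t$ and then ``integrate from $0$''. Two separate problems. First, the implication is false on its own terms: $\int_0^t \epsilon_n/\tau\,d\tau$ diverges, so $|g_n'(t)|\le\epsilon_n/t$ together with $g_n(0)=0$ does not give $|g_n(t)|\le\epsilon_n$; you would need at least $|g_n'(t)|\lesssim\epsilon_n t^{-1+\delta}$ for some $\delta>0$. Second, and more fundamentally, no useful pointwise bound on $g_n'$ is available: $g_n'(t)$ contains $-i\,e^{i\Phi_n}u_x(t,0)T(t,0)$, and $u_x(t,0)$ is, up to modulation, the exponential sum $\sum_{1\le|k|\le n^\nu}(\alpha_{n,k}+R_{n,k})\frac{k}{2t\sqrt t}\,e^{ik^2/(4t)}$. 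Its size fluctuates wildly with the Diophantine type of $t$; at times near rationals the sum exhibits no cancellation and is of order $c_n n^{2\nu}/(t\sqrt t)\sim n^{2\nu-1}/(t\sqrt t)$, which for $\nu=1$ dwarfs $\epsilon_n/t$ as $t\to 0$. The oscillations in $g_n'$ can only be exploited by \emph{integrating} in $t$, not by a fixed-$t$ estimate, so the reduction to $\sup_{(0,T)}|g_n|\to 0$ via a pointwise derivative bound cannot close.

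What the paper does instead is never let $g_n$ or $g_n'$ out of the nested integral. It writes $g_n(\tau)=\int_0^\tau g_n'(s)\,ds$, inserts this into $\int_0^t\sum_j e^{-ij^2/(4\tau)}\tau^{-1/2}g_n(\tau)\,d\tau$, splits $g_n'$ into the pieces $I$, $J$, $K$ of \eqref{IJK}, and then performs iterated integration by parts: first in the inner variable $s$, using the phases $e^{i(k^2-l^2)/(4s)}$ or $e^{ik^2/(4s)}$ of $g_n'$ to bring down denominators like $k^2-l^2$ (controlled by \eqref{log} and \eqref{sumjk}--\eqref{sumjk2}); then, for $j\neq 0$, in the outer variable $\tau$ against $e^{-ij^2/(4\tau)}$ to gain $1/j^2$ without losing powers of $n$. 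Each integration by parts spawns a new boundary term and a derivative term $T_s$ or $N_s$ that reinjects $u_x T$, creating a cascade that has to be closed after a finite number of steps, with the $j^2=k^2$ and $l^2=\pm(k^2-j^2)$ resonances treated separately. That iterated structure, rather than any pointwise control of $g_n$ or $g_n'$, is what makes the estimate go through.
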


\begin{proof}

In view of \eqref{Nt} and \eqref{u} we have, by omitting the subindices $n$ for simplicity, except for $c_n$,
$$\int_{0}^{t}\sum_{|j|\leq n^\nu}\frac{e^{-i\frac{j^2}{4\tau}}}{\sqrt{\tau}}g(\tau)\,d\tau=\int_{0}^{t}\sum_{|j|\leq n^\nu}\frac{e^{-i\frac{j^2}{4\tau}}}{\sqrt{\tau}}\int_{0}^{\tau}\left(-iu_x\, T+i\left(\frac{|u|^2}{2}-\frac{\sum_{j\in\mathbb Z}|\alpha_j|^2}{2s}\right) N+i\Phi_s N\right)e^{i\Phi}(s,0)dsd\tau$$
$$=-\int_{0}^{t} \sum_{|j|\leq n^\nu}\frac{e^{-i\frac{j^2}{4\tau}}}{\sqrt{\tau}} \int_0^\tau e^{i\left \lfloor{n^\nu}\right \rfloor c_n^2\log s}\sum_{k\neq 0} (\alpha_k+R_k(s))\frac{e^{i\frac{k^2}{4s}}}{2s\sqrt{s}}\,k\,T(s,0)e^{i\Phi(s)}dsd\tau$$
$$+i\int_{0}^{t} \sum_{|j|\leq n^\nu} \frac{e^{-i\frac{j^2}{4\tau}}}{\sqrt{\tau}}\int_0^\tau\sum_{k^2\neq l^2}(\alpha_k+R_k(s))(\overline{\alpha_l+R_l(s)})\frac{e^{i\frac{k^2-l^2}{4s}}}{2s}N(s,0)e^{i\Phi(s)}dsd\tau$$
$$+i\int_{0}^{t} \sum_{|j|\leq n^\nu} \frac{e^{-i\frac{j^2}{4\tau}}}{\sqrt{\tau}}\int_0^\tau\left(\sum_{k}(\alpha_k+R_k(s))(\overline{\alpha_{-k}+R_{-k}(s)})\frac{1}{2s}+\Phi_s\right)N(s,0)e^{i\Phi(s)}dsd\tau$$
\begin{equation}\label{IJK}=:I+J+K.\end{equation}

We start with the second term $J$. 
\begin{lemma}
$$J=i\int_{0}^{t} \sum_{|j|\leq n^\nu} \frac{e^{-i\frac{j^2}{4\tau}}}{\sqrt{\tau}}\int_0^\tau\sum_{k^2\neq l^2}(\alpha_k+R_k(s))(\overline{\alpha_l+R_l(s)})\frac{e^{i\frac{k^2-l^2}{4s}}}{2s}N(s,0)e^{i\Phi(s)}dsd\tau\overset{n\rightarrow\infty}{\longrightarrow}0.$$
\end{lemma}
\begin{proof}
Let us first observe that when $R_j(s)$ appears, it insures integration in $s$. Moreover ther is also enough decay in $n$ in view of \eqref{Rkestn}. On the remaining term
$$\tilde J:=i\int_{0}^{t} \sum_{|j|\leq n^\nu} \frac{e^{-i\frac{j^2}{4\tau}}}{\sqrt{\tau}}\int_0^\tau\sum_{k^2\neq l^2, |k|,|l|\leq n^\nu}\alpha_k\overline{\alpha_l}\frac{e^{i\frac{k^2-l^2}{4s}}}{2s}N(s,0)e^{i\Phi(s)}dsd\tau,$$
we shall perform an integration by parts
% in $s$ that will settle the integration in $s$ issue, and we shall look for smallness in $n$:
$$\tilde J=-\int_{0}^{t} \sum_{|j|\leq n^\nu} \frac{e^{-i\frac{j^2}{4\tau}}}{\sqrt{\tau}}\sum_{k^2\neq l^2, |k|,|l|\leq n^\nu}\alpha_k\overline{\alpha_l}\frac{e^{i\frac{k^2-l^2}{4\tau}}}{k^2-l^2}2\tau N(\tau,0)e^{i\Phi(\tau)}d\tau$$
$$+^2\int_{0}^{t} \sum_{|j|\leq n^\nu} \frac{e^{-i\frac{j^2}{4\tau}}}{\sqrt{\tau}}\int_0^\tau\sum_{k^2\neq l^2, |k|,|l|\leq n^\nu}\alpha_k\overline{\alpha_l}\frac{e^{i\frac{k^2-l^2}{4s}}}{k^2-l^2}(2sN(s,0)e^{i\Phi(s)})_sdsd\tau .$$
We also notice that
\begin{equation}\label{log}
|\sum_{k^2\neq l^2, |k|,|l|\leq n^\nu}\frac{1}{k^2-l^2}|\leq C(\log n^\nu)^2,
\end{equation}
and that in view of \eqref{Nt} and of the definition \eqref{defgn} of $\Phi$,
\begin{equation}\label{ders}
|(2sN(s,0)e^{i\Phi(s)})_s|%\leq C(1+\frac{n}{\sqrt{s}}+\frac{1}{n})
\leq C\frac{n}{\sqrt{s}}.
\end{equation}
Therefore we have got integrability in $s$ and $\tau$. Then, we have convergence to zero as $n$ goes to infinity for the boundary term and for the integral term with $j=0$. On the remaining integral terms with $j\neq 0$ we shall perform an integration by parts in $\tau$ to get summability in $j$ without loss in $n$:
$$\int_{0}^{t} \sum_{1\leq |j|\leq n^\nu} \frac{e^{-i\frac{j^2}{4\tau}}}{\sqrt{\tau}}\int_0^\tau\sum_{k^2\neq l^2, |k|,|l|\leq n^\nu}\alpha_k\overline{\alpha_l}\frac{e^{i\frac{k^2-l^2}{4s}}}{k^2-l^2}(2sN(s,0)e^{i\Phi(s)})_sdsd\tau$$
$$=\sum_{1\leq |j|\leq n^\nu} \frac{e^{-i\frac{j^2}{4t}}}{ij^2}4t\sqrt{t}\int_0^t\sum_{k^2\neq l^2, |k|,|l|\leq n^\nu}\alpha_k\overline{\alpha_l}\frac{e^{i\frac{k^2-l^2}{4s}}}{k^2-l^2}(2sN(s,0)e^{i\Phi(s)})_sds$$
$$-\int_{0}^{t} \sum_{1\leq |j|\leq n^\nu} \frac{e^{-i\frac{j^2}{4\tau}}}{ij^2}\left(4\tau\sqrt{\tau}\int_0^\tau\sum_{k^2\neq l^2, |k|,|l|\leq n^\nu}\alpha_k\overline{\alpha_l}\frac{e^{i\frac{k^2-l^2}{4s}}}{k^2-l^2}(2sN(s,0)e^{i\Phi(s)})_sds\right)_\tau d\tau.$$
Again by using \eqref{log} and \eqref{ders} we get the convergence to zero as $n$ goes to infinity.\\
\end{proof}

We consider now the first term and last terms in the decomposition \eqref{IJK}:
\begin{lemma}
$$I+K=-\int_{0}^{t} \sum_{|j|\leq n^\nu}\frac{e^{-i\frac{j^2}{4\tau}}}{\sqrt{\tau}} \int_0^\tau e^{i\left \lfloor{n^\nu}\right \rfloor c_n^2\log  s}\sum_{k\neq 0} (\alpha_k+R_k(s))\frac{e^{i\frac{k^2}{4s}}}{2s\sqrt{s}}\,k\,T(s,0)e^{i\Phi(s)}dsd\tau$$
$$+i\int_{0}^{t} \sum_{|j|\leq n^\nu} \frac{e^{-i\frac{j^2}{4\tau}}}{\sqrt{\tau}}\int_0^\tau\left(\sum_{k}(\alpha_k+R_k(s))(\overline{\alpha_{-k}+R_{-k}(s)})\frac{1}{2s}+\Phi_s\right)N(s,0)e^{i\Phi(s)}dsd\tau\overset{n\rightarrow\infty}{\longrightarrow}0.$$
\end{lemma}
\begin{proof}
The terms involving $\{R_k(s)\}$ in the first integral for $j=0$ and in the second integral converge all to zero as $n$ goes to $0$ by using \eqref{Rkestn} with $\gamma>\frac 12$. On the remaining terms, that involve $\{R_k(s)\}$ and $j\neq 0$ in the first integral, we integrate by parts in $\tau$ to get summability in $j$.  Eventually using again \eqref{Rkestn} with $\gamma>\frac 12$ we get convergence to zero as $n$ goes to $0$. 

Thus we have to show
\begin{equation}\label{tildeIK}
\tilde I+\tilde K=-\int_{0}^{t} \sum_{|j|\leq n^\nu}\frac{e^{-i\frac{j^2}{4\tau}}}{\sqrt{\tau}} \int_0^\tau e^{i\left \lfloor{n^\nu}\right \rfloor c_n^2\log s}\sum_{1\leq |k|\leq n^\nu} \alpha_k\frac{e^{i\frac{k^2}{4s}}}{2s\sqrt{s}}\,k\,T(s,0)e^{i\Phi(s)}dsd\tau
\end{equation}
$$+i\int_{0}^{t} \sum_{|j|\leq n^\nu} \frac{e^{-i\frac{j^2}{4\tau}}}{\sqrt{\tau}}\int_0^\tau\left(\sum_{ |k|\leq n^\nu}\alpha_k\overline{\alpha_{-k}}\frac{1}{2s}-\sum_{ |k|\leq n^\nu}|\alpha_k|^2\frac{1}{2s}\right)N(s,0)e^{i\Phi(s)}dsd\tau\overset{n\rightarrow\infty}{\longrightarrow}0.$$ 
In the case $\alpha_{n,k}=c_n$ for $|k|\leq n^\nu$ we have $\alpha_k=\alpha_{-k}$ and the term $\tilde K$ vanishes. Otherwise $\tilde K$ will cancel with a piece of $\tilde I$, as we shall see later. 
The term $\tilde I$ involves a bad power in $s$ for integration, so we need to integrate by parts in the $s$ variable:
$$\tilde I=-2i\int_{0}^{t} \sum_{|j|\leq n^\nu, 1\leq |k|\leq n^\nu}\alpha_k\frac{e^{i\frac{k^2-j^2}{4\tau}} }{k}\, \,e^{i\left \lfloor{n^\nu}\right \rfloor c_n^2\log{\tau}}T(\tau,0)e^{i\Phi(\tau)}d\tau$$
$$+2i\int_{0}^{t} \sum_{|j|\leq n^\nu}\frac{e^{-i\frac{j^2}{4\tau}}}{\sqrt{\tau}} \int_0^\tau\sum_{1\leq |k|\leq n^\nu}\alpha_k \frac{e^{i\frac{k^2}{4s}}}{k}\,(e^{i\left \lfloor{n^\nu}\right \rfloor c_n^2\log{s}}\sqrt{s}\,T(s,0)e^{i\Phi(s)})_sdsd\tau.$$

We first treat the boundary term. When $j^2=k^2$ we get a $\frac{\log n}{n}$ decay. When $j^2\neq k^2$ we perform an integration by parts and get decay in $n$ for all terms except the one coming from $T_\tau$ by using the estimate
\begin{equation}\label{sumjk}
\alpha_n\sum_{|j|\leq n^\nu, 1\leq |k|\leq n^\nu, j^2\neq k^2}\left|\frac{1}{k(k^2-j^2)}\right|\leq \frac{C}{n^{1^-}}.
\end{equation}
We are left with the term involving $T_\tau$:
$$I_b:=-2i\int_{0}^{t} \sum_{||j|\leq n^\nu, 1\leq |k|\leq n^\nu, j^2\neq k^2}\alpha_k\frac{e^{i\frac{k^2-j^2}{4\tau}} }{k(k^2-j^2)}\, \,e^{i\left \lfloor{n^\nu}\right \rfloor c_n^2\log{\tau}}\Im(\overline{u_x}N(\tau,0))e^{i\Phi(\tau)}\tau^2d\tau$$
$$=-i\int_{0}^{t} \sum_{|j|\leq n^\nu, 1\leq |k|\leq n^\nu, j^2\neq k^2}\alpha_k\frac{e^{i\frac{k^2-j^2}{4\tau}}}{k(k^2-j^2)}\,  \,e^{i\left \lfloor{n^\nu}\right \rfloor c_n^2\log{\tau}}$$
$$\times\Im (i\sum_le^{-i\left \lfloor{n^\nu}\right \rfloor c_n^2\log{\tau}}(\overline{\alpha_l+R_l(\tau)})le^{-i\frac{l^2}{4\tau}}N(\tau,0))e^{i\Phi(\tau)}\sqrt{\tau}d\tau.$$
In view of \eqref{sumjk} and \eqref{Rkestnw} we obtain the convergence to zero as $n$ goes to infinity for the term involving $R_l$.  So it remains to estimate the integral
$$\tilde I_b:=-i\int_{0}^{t} \sum_{|j|\leq n, 1\leq |k|\leq n, j^2\neq k^2}\alpha_k\frac{e^{i\frac{k^2-j^2}{4\tau}} }{k(k^2-j^2)}\, \,e^{i\left \lfloor{n^\nu}\right \rfloor c_n^2\log{\tau}}$$
$$\times\Im (i\sum_{1\leq |l|\leq n^\nu}e^{-i\left \lfloor{n^\nu}\right \rfloor c_n^2\log{\tau}}\overline{\alpha_l}\,l\,e^{-i\frac{l^2}{4\tau}}N(\tau,0))e^{i\Phi(\tau)}\sqrt{\tau}d\tau.$$
For $l^2=\pm(k^2-j^2)$ the summation in $l$ disappears and we get again by \eqref{sumjk} a $\frac{\log^2 n}{n}$ decay. For $l^2\neq \pm(k^2-j^2)$ we perform an integration by parts and for the worse term involving the $u_xT$ in $N_\tau$ we use
\begin{equation}\label{sumjk2}
c_n^2\sum_{|j|\leq n^\nu, 1\leq |k|,|l|\leq n^\nu, j^2\neq k^2, l^2\neq \pm(k^2-j^2), p}\frac{|(\alpha_p+R_p(\tau))\,p|}{|k(k^2-j^2)(k^2-j^2\pm l^2)|}\leq \frac{C}{n^{1^-}},
\end{equation}
having in mind \eqref{Rkestnw} for the part concerning the $R_p$ terms.\\

Finally we treat the last remaining term, namely the integral term in $\tilde I$:
$$2i\int_{0}^{t} \sum_{|j|\leq n^\nu}\frac{e^{-i\frac{j^2}{4\tau}}}{\sqrt{\tau}} \int_0^\tau \sum_{1\leq |k|\leq n^\nu}\alpha_k \frac{e^{i\frac{k^2}{4s}}}{k}\,(e^{i\left \lfloor{n^\nu}\right \rfloor c_n^2\log{s}}\sqrt{s}\,T(s,0)e^{i\Phi(s)})_sdsd\tau$$
$$=2i\int_{0}^{t} \sum_{|j|\leq n^\nu}\frac{e^{-i\frac{j^2}{4\tau}}}{\sqrt{\tau}} \int_0^\tau\sum_{1\leq |k|\leq n^\nu}\alpha_k \frac{e^{i\frac{k^2}{4s}}}{k}\,(e^{i\left \lfloor{n^\nu}\right \rfloor c_n^2\log{s}}\sqrt{s}\,e^{i\Phi(s)})_sT(s,0)e^{i\Phi(s)}dsd\tau$$
$$+2i\int_{0}^{t} \sum_{|j|\leq n^\nu}\frac{e^{-i\frac{j^2}{4\tau}}}{\sqrt{\tau}} \int_0^\tau \sum_{1\leq |k|\leq n^\nu} \alpha_k\frac{e^{i\frac{k^2}{4s}}}{k}\,e^{i\left \lfloor{n^\nu}\right \rfloor c_n^2\log{s}}\sqrt{s}\,T_s(s,0)\,e^{i\Phi(s)}dsd\tau.$$
For the first integral we observe that
$$|(e^{i\left \lfloor{n^\nu}\right \rfloor c_n^2\log{s}}\sqrt{s}\,e^{i\Phi(s)})_s|\leq C(\frac{1}{n\sqrt{s}}+\frac{1}{\sqrt{s}}+\frac1{ns}).$$
Then, if $j=0$, we get $\frac{\log n}{n}$ decay. For the terms $j\neq 0$ we integrate by parts in $\tau$ to get summability in $j$, so that we get a $\frac{\log n}{n}$ bound. Therefore we are left with the second integral
$$2i\int_{0}^{t} \sum_{|j|\leq n^\nu}\frac{e^{-i\frac{j^2}{4\tau}}}{\sqrt{\tau}} \int_0^\tau \sum_{1\leq |k|\leq n^\nu} \alpha_k\frac{e^{i\frac{k^2}{4s}}}{k}\,e^{i2\left \lfloor{n^\nu}\right \rfloor c_n^2\log\sqrt{s}}\sqrt{s}\,\Im(\overline{u_x}N(s,0))\,e^{i\Phi(s)}dsd\tau$$
$$=i\int_{0}^{t} \sum_{|j|\leq n^\nu}\frac{e^{-i\frac{j^2}{4\tau}}}{\sqrt{\tau}} \int_0^\tau \sum_{1\leq |k|\leq n}\alpha_k \frac{e^{i\frac{k^2}{4s}}}{k}\,e^{i2\left \lfloor{n^\nu}\right \rfloor c_n^2\log\sqrt{s}}$$
$$\times \Im(i\sum_{l\neq 0}e^{-i2n\left \lfloor{n^\nu}\right \rfloor c_n^2\log\sqrt{s}}(\overline{\alpha_l+R_l(s)})\frac{l}{s}e^{-i\frac{l^2}{4s}}N(s,0))\,e^{i\Phi(s)}dsd\tau.$$
For the $R_l$ terms we use \eqref{Rkestnw} to get integrability in $s$ and a $\frac{1}{n^{1^-}}$ decay. 
We are left with
$$I_i:=i\int_{0}^{t} \sum_{|j|\leq n^\nu}\frac{e^{-i\frac{j^2}{4\tau}}}{\sqrt{\tau}} \int_0^\tau \sum_{1\leq |k|\leq n^\nu}\alpha_k \frac{e^{i\frac{k^2}{4s}}}{k}\,e^{i\left \lfloor{n^\nu}\right \rfloor c_n^2\log{s}}$$
$$\times \Im(i\sum_{1\leq |l|\leq n^\nu}e^{-i\left \lfloor{n^\nu}\right \rfloor c_n^2\log{s}}\,\,\overline{\alpha_l}\frac{\,l}{s}\,e^{-i\frac{l^2}{4s}}N(s,0))\,e^{i\Phi(s)}dsd\tau$$
$$=\frac i2\int_{0}^{t} \sum_{|j|\leq n^\nu}\frac{e^{-i\frac{j^2}{4\tau}}}{\sqrt{\tau}} \int_0^\tau \sum_{1\leq |k|,|l|\leq n^\nu}\alpha_k\overline{\alpha_l} \frac{l}{k}e^{i\frac{k^2-l^2}{4s}}\frac{1}{s}\,N(s,0)\,e^{i\Phi(s)}dsd\tau$$
$$+\frac i2\int_{0}^{t} \sum_{|j|\leq n^\nu}\frac{e^{-i\frac{j^2}{4\tau}}}{\sqrt{\tau}} \int_0^\tau\sum_{1\leq |k|,|l|\leq n^\nu} e^{i\left \lfloor{n^\nu}\right \rfloor c_n^2\log{s}}\,\alpha_k\alpha_l\frac{l}{k}e^{i\frac{k^2+l^2}{4s}}\frac{1}{s}\,\overline{N(s,0)}\,e^{i\Phi(s)}dsd\tau.$$
In the case $\alpha_{n,k}=c_n$ for $|k|\leq n^\nu$, in the first integral the terms with $k=l$ and the terms with $k=-l$ cancel. Otherwise they cancel with $\tilde K$ in \eqref{tildeIK}. For all the remaining terms we need to perform an integration by parts to settle the integration in $s$. The terms not involving the $u_xT$ term of $N_s$ are converging to zero as $n$ goes to infinity since
\begin{equation}\label{sumjk2}
c_n^2\sum_{1\leq |k|,|l|\leq n^\nu, k^2\neq l^2}\frac{|l|}{|k(k^2\pm l^2)|}\leq \frac{C}{n^{2^-}}.
\end{equation}
It goes the same by using \eqref{Rkestnw} for the $R_p$-part coming from the $u_xT$ term of $N_s$. Therefore the last terms to treat are
$$\int_{0}^{t} \sum_{|j|\leq n^\nu}\frac{e^{-i\frac{j^2}{4\tau}}}{\sqrt{\tau}} \int_0^\tau\sum_{1\leq |k|,|l|,|p|\leq n^\nu}\alpha_k\overline{\alpha_l} \alpha_p\frac{l\,p}{k(k^2-l^2)}e^{i\frac{k^2-l^2+p^2}{4s}}\frac{e^{i\left \lfloor{n^\nu}\right \rfloor c_n^2\log{s}}}{\sqrt{s}}\,T(s,0)\,e^{i\Phi(s)}dsd\tau,$$
and
$$\int_{0}^{t} \sum_{|j|\leq n^\nu}\frac{e^{-i\frac{j^2}{4\tau}}}{\sqrt{\tau}} \int_0^\tau\sum_{1\leq |k|,|l|,|p|\leq n^\nu} \alpha_k\alpha_l\overline{\alpha_p}\frac{\,l\,p}{k(k^2+l^2)}e^{i\frac{k^2+l^2-p^2}{4s}}\frac{e^{i\left \lfloor{n^\nu}\right \rfloor c_n^2\log{s}}}{\sqrt{s}}\,T(s,0)\,e^{i\Phi(s)}dsd\tau.$$
For $j=0$ we get again by \eqref{sumjk2} a $\frac{1}{n^{1^-}}$ decay. For $j\neq 0$ we perform a last integration by parts in $\tau$ to get summability in $j$ without loss in $n$ and use again \eqref{sumjk2} to get a $\frac{1}{n^{1^-}}$ decay.
\end{proof}
In view of \eqref{IJK} and the last two lemmas the Proposition \ref{propgn} is proved. 
%\begin{equation}\label{sumjk3}
%\sum_{1\leq |k|,|l|\leq n, j^2\neq k^2, l^2\neq \pm(k^2-j^2), p}\frac{|\alpha_k\alpha_l(\alpha_p+R_p(\tau))\,l\,p|}{|k(k^2\pm l^2)|}\leq C\frac{\log^2 n}{n}.
%\end{equation}

\end{proof}

\bigskip

\section{Convergence of the modulated normal vectors at $(t,x)=(0,0)$}\label{sectionNn}
\begin{prop}\label{cvN}
The following convergence holds:
$$\lim_{n\rightarrow\infty}\tilde N_n(0,0)=(0,\frac{1-i}{\sqrt{2}},\frac {-1-i}{\sqrt{2}}).$$
\end{prop}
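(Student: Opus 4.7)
The plan is to compute $\lim_{n\to\infty}\tilde N_n(0,0)$ by reducing the $t\to 0$ limit via the evolution ODE \eqref{Nt}, then identifying the resulting expression at a convenient reference time where the limit $n\to\infty$ is tractable.

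First, I would write down the ODE satisfied by the modulated normal $\tilde N_n(t,0) = e^{i\Phi_n(t)}N_n(t,0)$: combining \eqref{Nt} with the derivative $\Phi_{n,t}(t) = -c_n^2\lfloor n^\nu\rfloor/t$ of the phase \eqref{defgn}, one has
\[
\partial_t\tilde N_n(t,0) = e^{i\Phi_n(t)}\left(-iu_{n,x}(t,0)T_n(t,0) + i\bigl(\tfrac{|u_n(t,0)|^2}{2} - \tfrac{\sum_k|\alpha_{n,k}|^2}{2t} + \Phi_{n,t}(t)\bigr)N_n(t,0)\right).
\]
Expanding $u_n(t,0)$ via \eqref{u}, I would split the right-hand side into resonant (diagonal $k=l$) and non-resonant ($k^2\neq l^2$) contributions in the square $|u_n|^2$. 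The resonant piece cancels against the counter-terms $-\sum_k|\alpha_{n,k}|^2/(2t)$ and $\Phi_{n,t}$ up to a remainder of size $O(c_n^2/t) = O(n^{-2}/t)$, while the non-resonant oscillatory terms are handled by repeated integration by parts in $t$ using the phases $e^{i(k^2-l^2)/(4t)}$ and the decay estimates \eqref{Rkestn}-\eqref{Rkestnw} from Proposition \ref{lemmaRk}, exactly as done for the terms $I$, $J$, $K$ in the proof of Proposition \ref{propgn}.

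Second, integrating the simplified ODE from a fixed reference time $T$ down to $t\to 0$, these estimates yield $\tilde N_n(0,0) = \tilde N_n(T,0) + o_n(1)$, reducing the problem to evaluating $\lim_{n\to\infty} e^{i\Phi_n(T)}N_n(T,0)$.

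Third, I would compute this reference-time limit directly. Since $\sum_k|\alpha_{n,k}|^2 = c_n^2(2\lfloor n^\nu\rfloor+1) = O(n^{\nu-2}) \to 0$, at any fixed $T$ the filament function $u_n(T,\cdot)$ is small and $(T_n,N_n)(T,\cdot)$ is a perturbation of the parallel frame along a straight line obtained by linear Schrödinger evolution from $\sum_{|k|\leq n^\nu}c_n\delta(x-k)$. At $x=0$ the normal vector is computed from the linear evolution of the frame starting at $t=0^+$; a stationary-phase evaluation of $\int e^{i s^2/(4T)}ds/\sqrt{T} = 2\sqrt{\pi}\,e^{i\pi/4}$ produces the universal Fresnel factor $e^{-i\pi/4}$, which combined with the limiting tangent $T_n(T,0)\to (1,0,0)$ (coming from $\theta_n\to\pi$ in \eqref{tang}) yields $e^{-i\pi/4}(0,1,-i) = (0,\tfrac{1-i}{\sqrt 2}, \tfrac{-1-i}{\sqrt 2})$.

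The main obstacle is the first step: the careful bookkeeping of cancellations between $\Phi_{n,t}$, the nonlinear self-term of $|u_n|^2/2$, and the subtraction $-\sum|\alpha|^2/(2t)$, together with the integration-by-parts treatment of the off-diagonal oscillations. The denominators $k^2-l^2$ produced by these integrations by parts generate logarithmic factors of the form $(\log n^\nu)^2$ as in \eqref{log}, which must be balanced against the $c_n^2 = O(n^{-2})$ smallness of the coefficients to produce a net $o_n(1)$ error. A secondary subtlety is verifying that the gauge convention for the parallel frame in \cite{BV5} is consistent with the Fresnel phase above, so that $e^{-i\pi/4}(0,1,-i)$ rather than $e^{i\pi/4}(0,1,i)$ or a similar variant appears in the limit.
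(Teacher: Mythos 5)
Your route---pushing $\tilde N_n(t,0)$ down to $t=0$ via the time-ODE \eqref{Nt} and then evaluating $\tilde N_n(T,0)$ at a fixed reference time---is genuinely different from the paper's, which instead uses the space-ODE \eqref{Nx} to transport $\tilde N_n(0,0)$ to $\tilde N_n(0,0^+)$ at $x$ strictly between corners, and then evaluates $\tilde N_n(0,0^+)$ explicitly from the Guti\'errez--Rivas--Vega characterization of the self-similar corner profile (Lemmas~4.4, 4.6, 4.7 of \cite{BV5} and Theorem~1 of \cite{GRV}). Your first two steps (modulated ODE in time, cancellation of the resonant part against $-\sum_k|\alpha_{n,k}|^2/(2t)$, IBP on the non-resonant oscillations, producing $\tilde N_n(0,0)=\tilde N_n(T,0)+o_n(1)$) are plausible and structurally parallel to what the paper does in \S4 for Proposition~\ref{propgn}, though they would need to be carried out carefully since the off-the-shelf bound $nC\sqrt t$ from \cite{BV5} is not enough at fixed $T$.

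The genuine gap is in the third step. The claim that $\lim_{n\to\infty}\tilde N_n(T,0)$ can be read off from a stationary-phase evaluation of $\int e^{is^2/(4T)}\,ds/\sqrt T$ does not hold up: the normal vector $N_n$ does \emph{not} solve a linear Schr\"odinger equation, it solves the frame transport system \eqref{Nx}--\eqref{Nt}, and its value at $(T,0)$ records the $O(1)$ rotation accumulated from $t=0^+$ where the frame is discontinuous at the corner. At fixed $T$ the filament function $u_n(T,\cdot)$ is indeed small and the curve near $x=0$ is nearly straight, but for a nearly straight curve the parallel frame is determined by its history, not by local data. So the Fresnel factor $e^{-i\pi/4}$ is not produced by integrating the phase $e^{is^2/(4T)}$ at time $T$; it is produced by the asymptotics of the self-similar solution's normal vector $B^+_c$ at the corner, which the paper reads off from formulas (55),(47),(48),(69),(56) of \cite{GRV} (quoted as \eqref{ABlim}), combined with the orthonormality relations \eqref{AB}--\eqref{A} and the normalization of $\tilde\Theta_{n,0}$ via the tangent condition \eqref{tang}. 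Note also that the computation is more delicate than ``$T_n\to(1,0,0)$ plus a phase'': the coordinates $a_{n,1},a_{n,2},b_{n,1},b_{n,2}$ in the decomposition \eqref{decB} individually diverge like $1/c_n$, and only the combinations $(a_{n,1}-a_{n,2})\cos(\theta_n/2)$ and $a_{n,1}+a_{n,2}$ have finite limits, while $a_{n,3},b_{n,3}\to -1/\sqrt 2$. None of this information is available from stationary phase alone; without the GRV profile the limit is simply unidentified. In short, the reduction to a fixed reference time does not avoid the core difficulty---it relocates it, and the proposed heuristic at the reference time does not close it.
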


\begin{proof} Recall that from Lemma 4.5 in \cite{BV5} we have 
$$\underset{t\rightarrow 0}{\lim}\,e^{i\sum_{j\neq x}|\alpha_{n,j}|^2\log\frac{|x-j|}{\sqrt t}}N_n(t,x)=:\tilde N_n(0,x),$$
with 
$$|e^{i\sum_{j}|\alpha_{n,j}|^2\log\frac{|x-j|}{\sqrt t}}N_n(t,x)-\tilde N_n(0,x)|\leq  nC\frac{\sqrt{t}}{x},\quad x\in(0,\frac 14),$$
$$|e^{i\sum_{j\neq 0}|\alpha_{n,j}|^2\log\frac{|j|}{\sqrt t}}N_n(t,0)-\tilde N_n(0,0)|\leq  nC\sqrt{t}.$$
The growth in $n$ comes from $\|\{\alpha_j\}\|_{l^{1,1}}$. Moreover, Lemma 4.6 in \cite{BV5} insures us that 
$$\tilde N_n(0,x_1)=\tilde N_n(0,x_2), \quad x_1,x_2\in(0,\frac 14).$$
We have for $t\in(0,T)$ and  $x\in(0,\frac 14)$:
$$|\tilde N_n(0,0)-\tilde N_n(0,0^+)|\leq |\tilde N_n(0,0)-e^{i\sum_{j\neq 0}|\alpha_{n,j}|^2\log\frac{|j|}{\sqrt t}}N_n(t,0)|+$$
$$|N_n(t,0)-N_n(t,x)|+|e^{i\sum_{j\neq 0}|\alpha_{n,j}|^2\log\frac{|j|}{|x-j|}}-1||N_n(t,x)|$$
$$+|e^{i|\alpha_{n,0}|^2\log \frac{x}{\sqrt{t}}}-1||N_n(t,x)|+|e^{i\sum_{j}|\alpha_{n,j}|^2\log\frac{|x-j|}{\sqrt t}}N_n(t,x)-\tilde N_n(0,x)|.$$
In view of the above estimates we get for $t\in(0,T)$ and $x=\frac 18$:
\begin{equation}\label{splitN}
|\tilde N_n(0,0)-\tilde N_n(0,0^+)|\leq nC\sqrt{t}+C\frac{\log n}{n^2}+C\frac{\log t}{n^2}+|N_n(t,0)-N_n(t,x)|.
\end{equation}

Now for the last term, we use \eqref{Nx} to write
$$N_n(t,0)-N_n(t,x)=\int_0^x(-uT)(t,y)dy=-\int_0^x\sum_je^{i\left \lfloor{n^\nu}\right \rfloor c_n^2\log {t}}(\alpha_{n,j}+R_{n,j}(t))\frac{e^{-i\frac{(y-j)^2}{4t}}}{\sqrt{t}}\,T_n(t,y)dy.$$
By using Proposition \ref{lemmaRk} we obtain that the term involving $R_{n,j}$ is controlled by $\frac{C}{n^{2^-}}$. On the remaining part we perform an integration by parts for $j\neq 0$:
$$\frac{e^{i\left \lfloor{n^\nu}\right \rfloor c_n^2\log {t}}}{\sqrt{t}}\int_0^x\sum_{1\leq|j|\leq n^\nu}\alpha_je^{-i\frac{(y-j)^2}{4t}}\,T_n(t,y)dy=\left[e^{i\left \lfloor{n^\nu}\right \rfloor c_n^2\log {t}}2i\sqrt{t}\sum_{1\leq|j|\leq n^\nu}\alpha_j\frac{e^{-i\frac{(y-j)^2}{4t}}}{y-j}\,T_n(t,y)\right]_0^x$$
$$-e^{i\left \lfloor{n^\nu}\right \rfloor c_n^2\log {t}}2i\sqrt{t}\int_0^x\sum_{1\leq|j|\leq n^\nu}\alpha_je^{-i\frac{(y-j)^2}{4t}}(\frac{T_n(t,y)}{y-j})_y\,dy.$$
As $\partial_yT_n$ is upper-bounded by $\frac C{\sqrt{t}}$ we get that all this part is controlled by $C\frac {\log n}{n}$. We are left with the term for $j=0$:
$$\alpha_0\frac{e^{i\left \lfloor{n^\nu}\right \rfloor c_n^2\log {t}}}{\sqrt{t}}\int_0^x e^{-i\frac{y^2}{4t}}\,T_n(t,y)dy.$$
The integral on $y\in(0,\sqrt{t})$ is upper-bounded by $\frac{C}{n}$. On the remaining region of integration we perform an integration by parts
$$\alpha_0\frac{e^{i\left \lfloor{n^\nu}\right \rfloor c_n^2\log {t}}}{\sqrt{t}}\int_{\sqrt{t}}^x e^{-i\frac{y^2}{4t}}\,T_n(t,y)dy=\left[\alpha_0e^{i\left \lfloor{n^\nu}\right \rfloor c_n^2\log {t}}2i\sqrt{t} \frac{e^{-i\frac{y^2}{4t}}}{y}\,T_n(t,y)\right]_{\sqrt{t}}^x$$
$$-\alpha_0e^{i\left \lfloor{n^\nu}\right \rfloor c_n^2\log {t}}2i\sqrt{t} \int_{\sqrt{t}}^xe^{-i\frac{y^2}{4t}}(\frac{T_n(t,y)}{y})_y\,dy.$$
Again as $\partial_yT_n$ is upper-bounded by $\frac C{\sqrt{t}}$ we get that this part is controlled by $C\frac {\log t}{n}$.

Summarizing we have obtained that for $t\in(0,T)$
\begin{equation}\label{splitNbis}
|\tilde N_n(0,0)-\tilde N_n(0,0^+)|\leq nC\sqrt{t}+C\frac{\log n}{n}+C\frac {\log t}{n}.
\end{equation}
By choosing $t$ small enough with respect to $n$ we obtain that
$$N_n(0,0)-\tilde N_n(0,0^+)\overset{n\rightarrow\infty}{\longrightarrow}0,$$
and the Proposition follows by using the next lemma. 

\begin{lemma}\label{lemmalimitN}
The following convergence holds:
$$\lim_{n\rightarrow\infty}\tilde N_n(0,0^+)=(0,\frac{1-i}{\sqrt{2}},\frac {-1-i}{\sqrt{2}}).$$
\end{lemma}
\begin{proof}
We recall the results at hand of $\tilde N_n(0,0^\pm)$: in view Lemma 4.4 and Lemma 4.7 of \cite{BV5} there exist rotations $\tilde\Theta_{n,k}$ such that
$$T_n(0,k^+)=\tilde\Theta_{n,k}(A^+_{|\alpha_{n,k}|}),\quad T_n(0,k^-)=\tilde\Theta_{n,k}(-A^-_{|\alpha_{n,k}|}),$$
$$\tilde N_n(0,k^\pm)=e^{i\sum_{j\neq k}|\alpha_{n,k}|^2\log|k-j|}e^{-i\arg(\alpha_{n,k})}\tilde\Theta_{n,k}(B^\pm_{|\alpha_{n,k}|}),$$
and 
$$T_n(0,0^\pm)=(\sin\frac{\theta_{n}}2,\pm\cos\frac{\theta_{n}}2,0),$$
to fit the directions of $\partial_x\chi_n(0,0^\pm)$ in \eqref{tang}.  
Therefore with respect to the notations in Lemma 4.4 and Lemma 4.7 of \cite{BV5}, the rotation $\tilde\Theta_{n,0}$ is determined by its values
$$\tilde\Theta_{n,0}(A^\pm_{c_n})=(\sin\frac{\theta_{n}}2,\pm\cos\frac{\theta_{n}}2,0).$$
In particular 
$$ \tilde\Theta_{n,0}(\frac{A^+_{c_n}\wedge A^-_{c_n}}{|A^+_{c_n}\wedge A^-_{c_n}|})=(0,0,1).$$
It follows that $\tilde N_n(0,0^\pm)$ is determined in terms of $\alpha_n, A^\pm_{c_n}$ and $B^\pm_{c_n}$. More precisely, we decompose
\begin{equation}\label{decB}
\left\{\begin{array}{c}\Re(B^+_{c_n})=a_{n,1} A^+_{c_n} +a_{n,2} A^-_{c_n} + a_{n,3} \frac{A^+_{c_n}\wedge A^-_{c_n}}{|A^+_{c_n}\wedge A^-_{c_n}|},\\\Im(B^+_{c_n})=b_{n,1} A^+_{c_n} +b_{n,2} A^-_{c_n} + b_{n,3} \frac{A^+_{c_n}\wedge A^-_{c_n}}{|A^+_{c_n}\wedge A^-_{c_n}|}.\end{array}\right.
\end{equation}
We have then
\begin{equation}\label{limNexpl}
\tilde N_n(0,0^+)=e^{ic_n^22\log(\left \lfloor{n^\nu}\right \rfloor!)}e^{-i\arg(\alpha_{n,0})}(\tilde\Theta_{n,0}(\Re(B^+_{c_n}))+i\tilde\Theta_{n,0}(\Im(B^+_{c_n})))\end{equation}
$$=e^{ic_n^22\log(\left \lfloor{n^\nu}\right \rfloor!)}e^{-i\arg(\alpha_{n,0})}$$
$$\times\left(((a_{n,1}+a_{n,2})\sin\frac{\theta_{n}}2,(a_{n,1}-a_{n,2})\cos\frac{\theta_{n}}2,a_{n,3})+i((b_{n,1}+b_{n,2})\sin\frac{\theta_{n}}2,(b_{n,1}-b_{n,2})\cos\frac{\theta_{n}}2,b_{n,3})\right).$$
Now to conclude we have to check if $\tilde N_n(0,0^+)$ has a limit. On one hand $\theta_n$ converges to $\pi$, and on the other hand we have $\arg(\alpha_{n,0})=0$. Thus for having a limit for $\tilde N_n(0,0^+)$ it is enough to show that  $a_{n,1}+a_{n,2},(a_{n,1}- a_{n,2})\cos\frac{\theta_{n}}2, a_{n,3}$ are convergent, and similarly for the $b-$coefficients.

We recall that from Theorem 1 {\em{(iii)-(iv)}} in \cite{GRV} we know that the unitary vectors $\Re (B^\pm_{c_n}), \Im (B^\pm_{c_n}), A^\pm_{c_n}$ satisfy:
\begin{equation}\label{AB}
B^\pm_{c_n}\perp A^\pm_{c_n}, \quad (A^+_{c_n,1}, A^+_{c_n,2}, A^+_{c_n,3})= (A^-_{c_n,1}, -A^-_{c_n,2}, -A^-_{c_n,3}),\quad A^+_{c_n,1}=e^{-\frac \pi 2 c_n^2},
\end{equation}
and in particular 
\begin{equation}\label{A}
\frac{A^+_{c_n}\wedge A^-_{c_n}}{|A^+_{c_n}\wedge A^-_{c_n}|}=\frac{(0,A^+_{c_n,3},-A^+_{c_n,2})}{\sqrt{1-e^{-\pi c_n^2}}},\quad \langle A^-_{c_n},A^+_{c_n}\rangle=2e^{-\pi c_{n}^2}-1.
\end{equation}
We also have from formulas (55), (47), (48), (69) and (56) in \cite{GRV}:
\begin{equation}\label{ABlim}
B^+_{c_n}\overset{n\rightarrow\infty}\approx (-c_n \sqrt{\frac \pi 2},-1, 0)+i(c_n \sqrt{\frac{\pi}{2}},0, -1),\quad A^+_{c_n}\overset{n\rightarrow\infty}\approx (e^{-\frac \pi 2 c_n^2}, -c_n\sqrt{\frac{\pi}{2}}, c_n\sqrt{\frac{\pi}{2}}).
\end{equation}

First, by looking at the first two coordinates of the decomposition \eqref{decB} we get
\begin{equation}\label{a1+a2}
(\Re(B^+_{c_n}))_1=(a_{n,1}+a_{n,2}) e^{-\frac \pi 2 c_n^2},
\end{equation}
and
\begin{equation}\label{a3}
(\Re(B^+_{c_n}))_2=(a_{n,1}-a_{n,2})A^+_{c_n,2}+a_{n,3}\frac{A^+_{c_n,3}}{\sqrt{1-e^{-\pi c_n^2}}},
\end{equation}
and similarly for  $b_n$ using $\Im (B^+_{c_n})$.

Secondly, from the orthogonality relation in \eqref{AB} we have
$$
\left\{\begin{array}{c}0=\langle  \Re(B^+_{c_n}),A^+_{c_n}\rangle=a_{n,1}+a_{n,2}\langle A^-_{c_n},A^+_{c_n}\rangle=a_{n,1}+a_{n,2}(2e^{-\pi c_{n}^2}-1),\\0=\langle  \Im(B^+_{c_n}),A^+_{c_n}\rangle=b_{n,1}+b_{n,2}\langle A^-_{c_n},A^+_{c_n}\rangle=b_{n,1}+b_{n,2}(2e^{-\pi c_{n}^2}-1),\end{array}\right.
$$
thus by using also \eqref{a1+a2},
\begin{equation}\label{a1a2}
\left\{\begin{array}{c}a_{n,1}=a_{n,2}(1-2e^{-\pi c_{n}^2}),\quad a_{n,2}=\frac{\Re(B^+_{c_n})e^{\frac \pi 2 c_n^2}}{2(1-e^{-\pi c_{n}^2})},\\b_{n,1}=b_{n,2}(1-2e^{-\pi c_{n}^2}),\quad b_{n,2}=\frac{\Im(B^+_{c_n})e^{\frac \pi 2 c_n^2}}{2(1-e^{-\pi c_{n}^2})}.\end{array}\right.
\end{equation}
Finally, by using \eqref{ABlim} we obtain 
\begin{equation}\label{a1a2next}
a_{n,1}\overset{n\rightarrow\infty}\approx \frac{1}{c_n\,2\sqrt{2\pi}},\quad a_{n,2}\overset{n\rightarrow\infty}\approx -\frac{1}{c_n\,2\sqrt{2\pi}},\quad b_{n,1}\overset{n\rightarrow\infty}\approx -\frac{1}{c_n\,2\sqrt{2\pi}},\quad b_{n,2}\overset{n\rightarrow\infty}\approx \frac{1}{c_n\,2\sqrt{2\pi}}.
\end{equation}
Then, in view of \eqref{ABlim} and \eqref{a3} we get 
\begin{equation}\label{a1a2a3bis}
a_{n,3}\overset{n\rightarrow\infty}\longrightarrow -\frac 1{\sqrt{2}},\quad b_{n,3}\overset{n\rightarrow\infty}\longrightarrow -\frac 1{\sqrt{2}}.
\end{equation}

In particular 
$$
(a_{n,1}- a_{n,2})\cos\frac{\theta_{n}}2\overset{n\rightarrow\infty}\longrightarrow\frac1{\sqrt{2}},\quad (b_{n,1}- b_{n,2})\cos\frac{\theta_{n}}2\overset{n\rightarrow\infty}\longrightarrow-\frac1{\sqrt{2}},
$$
and from \eqref{a1+a2} and \eqref{ABlim} we have
$$
a_{n,1}+ a_{n,2}\overset{n\rightarrow\infty}\longrightarrow 0,\quad b_{n,1}+ b_{n,2}\overset{n\rightarrow\infty}\longrightarrow 0.
$$
Then, from \eqref{limNexpl}, the convergence and the explicit limit of $\tilde N_n(0,0^+)$ as $n$ goes to infinity follow.

\end{proof}

\end{proof}

\section{Proof of Theorem \ref{th} in the nontrivial torsion case}\label{secthelix}
First we shall recall the construction of the solutions to the  binormal flow which at an initial time are given by non-planar polygonal lines.
In \cite{BV5} we showed that given a sequence of the complex numbers $\{\alpha_n\}_{n\in\mathbb Z}$, with at least two non-trivial values, starting from the Schr\"odinger solution in \eqref{ansatz} we obtain the BF evolution of a polygonal line $\chi(0)$ fully characterized modulo translation and rotation by the following description of its curvature and torsion angles. 

We denote $n_0,n_1\in\mathbb Z$ two consecutive locations where the sequence is non-trivial: $\alpha_{n_0}\neq 0, \alpha_{n_1}\neq 0, \alpha_{k}=0,\,\forall \, n_0<k<n_1$. 
We consider the ordered set of integers $\mathcal L=\{n_k\}_{k\in L}$ where the sequence does not vanish, containing in particular $n_0$ and $n_1$, so $L$ stand for a finite or infinite set of consecutive integers including $0$ and $1$.  Then $\mathcal L$ represents the locations of the corners of the polygonal line $\chi(0)$. At a corner located at $n_k\in \mathcal L$, the curvature angle is the one of the selfsimilar BF solution $\chi_{|\alpha_{n_k}|}$, that is $\theta_k$ given by formula \eqref{angle}:
$$\sin\frac{\theta_k} 2=e^{-\frac\pi 2|\alpha_{n_k}|^2}.$$
For $k,k+1\in L, k\geq 0$ the torsion is determined by the identities
\begin{equation}\label{argcoef}
\left\{\begin{array}{c}
\frac{T_{k-1}\wedge T_k}{|T_{k-1}\wedge T_k|}.\frac{T_k\wedge T_{k+1}}{|T_k\wedge T_{k+1}|}=-\cos(f_{|\alpha_{n_k}|}-f_{|\alpha_{n_{k+1}}|}+Arg(\alpha_{n_k})-Arg(\alpha_{n_{k+1}})) ,\\ 
\frac{T_{k-1}\wedge T_k}{|T_{k-1}\wedge T_k|}\wedge\frac{T_k\wedge T_{k+1}}{|T_k\wedge T_{k+1}|}=-sgn(\sin(f_{|\alpha_{n_k}|}-f_{|\alpha_{n_{k+1}}|}+Arg(\alpha_{n_k})-Arg(\alpha_{n_{k+1}})))T_k,
\end{array}\right.
\end{equation}
where we denoted $T_k=\partial_x\chi(0,x)$ for $x\in (n_k,n_{k+1})$, and $f$ is a function described in \cite{BV5} that we will not explicit here as we will work with sequences $\{\alpha_n\}_{n\in\mathbb Z}$ of same modulus.For negative subindices $k<0$ the torsion is defined in a similar way.\par
In particular, by considering the sequence of complex numbers in \eqref{defalpha} we obtain indeed the BF evolution of the helicoidal polygonal lines $\chi_n(0)$ in the statement of Theorem \ref{th}. \\

The proof of Theorem \ref{th} in the nontrivial torsion case goes the same as for the planar case treated in the previous subsections, except we get a modification of the term yielding Riemann's function in Lemma \ref{lemmaR}. More precisely, we obtain   
\begin{equation}\label{cvRhelix}
n(\chi_{n}(t,0)-\chi_{n}(0,0))- \frac{\Gamma}{2\sqrt{\pi}}\Im\,\bigl((0,\frac{1-i}{\sqrt{2}},\frac {-1-i}{\sqrt{2}})\,\int_{0}^{t}\sum_{|j|\leq n^\nu }e^{-ij\omega_0}\frac{e^{-i\frac{j^2}{4\tau}}}{\sqrt{\tau}}\,d\tau\bigr)\overset{n\rightarrow\infty}\longrightarrow 0,
\end{equation}
uniformly on $(0,T)$. We shall treat the integral as in the proof of Lemma \ref{lemmaR}. First the summation can be extended to the whole set of integers as by integrating by parts
$$|\int_{0}^{t}\sum_{|j|> n^\nu }e^{-ij\omega_0}\frac{e^{-i\frac{j^2}{4\tau}}}{\sqrt{\tau}}\,d\tau|\leq \frac {C}{n^\nu}.$$
Thus we have to analyze
\begin{equation}\label{int}
I(t)=\int_{0}^{t}\sum_{j\in\mathbb Z}e^{-ij\omega_0}e^{-i\frac{j^2}{4\tau}}\frac{d\tau}{\sqrt{\tau}}=\int_{0}^{t}e^{i\tau\omega_0^2}\sum_{j\in\mathbb Z}e^{-i\frac{(j+2\tau\omega_0)^2}{4\tau}}\frac{d\tau}{\sqrt{\tau}}.
\end{equation}
Now we use again Poisson's summation formula $\sum_{j\in\mathbb Z}f(j)=\sum_{j\in\mathbb Z}\hat f(2\pi j)$ to get
$$
\sum_{j\in\mathbb Z}e^{-i2\pi  jr}e^{i4\pi^2 tj^2}=\sum_{j\in\mathbb Z}\int e^{-i 2\pi x(j+r)+i4\pi^2 tx^2}dx=\frac 1{2\pi\sqrt{ t}}\sum_{j\in\mathbb Z}\int e^{-ix\frac {j+r}{\sqrt{t}}+ix^2}dx
$$
$$=\frac 1{2\pi\sqrt{ t}}\sum_{j\in\mathbb Z}\widehat{e^{i\cdot^2}}(\frac {j-r}{\sqrt{t}})=\frac {e^{i\frac\pi 4}}{2\sqrt\pi \sqrt{t}}\sum_{j\in\mathbb Z}e^{- i\frac{(j-r)^2}{4t}}.$$
By using this formula with $r=2\tau\omega_0$ we obtain
\begin{equation}\label{int2}
I(t)=2\sqrt{\pi}e^{-i\frac\pi 4}\int_{0}^{t}e^{i\tau\omega_0^2}\sum_{j\in\mathbb Z}e^{-i4\pi \tau \omega_0j}e^{i4\pi^2 \tau j^2}\,d\tau=2\sqrt{\pi}e^{-i\frac\pi 4}\int_{0}^{t}\sum_{j\in\mathbb Z}e^{i\pi^2 \tau (2j-\frac{\omega_0}{\pi})^2}\,d\tau.
\end{equation}
We recall now that $\omega_0$ is of rational type, $\omega_0=\frac ab\pi$, with $a,b$ with no common divisors, %\footnote{{\textcolor{blue}{I removed : We note that the function  $$\sum_{j\in\mathbb Z}e^{-i4\pi \tau \omega_0j}e^{i4\pi^2 \tau j^2}=\sum_{j\in\mathbb Z}e^{-i4\pi^2 \frac \tau d j(c-jd)}$$ is a Fourier series with period $\frac{d}{2\pi}$, having as non-trivial frequencies the sequence $j(c-jd)$. This is coherent with the results in \cite{DHKV}. }}} 
thus
$$I(t)=2\sqrt{\pi}e^{-i\frac\pi 4}\int_{0}^{t}\sum_{j\in\mathbb Z}e^{i\pi^2 \tau (2j-\frac ab)^2}\,d\tau=\frac{2b^2}{\pi \sqrt{\pi}}e^{-i\frac\pi 4}\sum_{j\in\mathbb Z}\frac{e^{i\pi^2 \frac t{b^2} (2bj-a)^2}-1}{i(2bj-a)^2}.$$
In view of \eqref{cvRhelix}-\eqref{int} we obtain   
\begin{equation}\label{cvRhelixbis}
n(\chi_{n}(t,0)-\chi_{n}(0,0))- \Im\,\bigl((0,-i,-1)\,\Gamma\sum_{j\in\mathbb Z}\frac{e^{i4\pi^2 t(j-\frac a{2b})^2}-1}{i4\pi^2(j-\frac a{2b})^2}\bigr)\overset{n\rightarrow\infty}\longrightarrow 0.
\end{equation}
The nontrivial torsion case of Theorem \ref{th} with 
\begin{equation}
\label{Rtilde} 
\frak{\tilde R}(t)=-\Gamma\sum_{j\in\mathbb Z}\frac{e^{i4\pi^2 t(j-\frac a{2b})^2}-1}{i4\pi^2(j-\frac a{2b})^2},
\end{equation}
follows from the following Propositions \ref{propspectrum}-\ref{propfractal}.

\begin{prop}\label{propspectrum} Let $n\in\mathbb N,m\in\mathbb N^*$. 
The spectrum of singularities of the function 
\begin{equation}\label{varR1}
\frak R_{n,m}(t)=\sum_{j\in\mathbb Z}\frac{e^{i2\pi t(mj-n)^2}-1}{(mj-n)^2},
\end{equation}
enjoys the same property \eqref{spectrum} of Riemann's function:
 %{\footnote{\textcolor{blue}{Should we talk also about the rest of the spectrum, that is $d(\frac 32)=0$ and $d(\beta)=+\infty$ for the rest ? This might mean that we should also check the same properties for $\mathfrak{\tilde R}$ appearing from the helices. From \cite{ChUb} we can get that for $\beta>3/4$ we have $d(\beta)$ is zero or infinity. \\ I would say no, as we are interested to point out Frisch-Parisi multifractal formalism, that comes only from the interval $[1/2,3/4]$.}}}
$$
d_{\frak R_{n,m}}(\beta)=4\beta-2, \quad \forall \beta \in [\frac 12,\frac 34].
$$
\end{prop}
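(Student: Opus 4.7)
The plan is to reduce $\frak R_{n,m}$ to a restricted-summation variant of Riemann's function and then adapt Jaffard's wavelet-based multifractal analysis \cite{Ja}, following the Gauss-sum approach of Chamizo and Ubis \cite{ChUb}. After the change of variable $k=mj-n$, the function can be rewritten as
$$\frak R_{n,m}(t)=\sum_{\substack{k\in\mathbb Z,\,k\neq 0\\k\equiv -n\,(\mathrm{mod}\,m)}}\frac{e^{i2\pi tk^2}-1}{k^2}+(\text{affine correction if }m\mid n),$$
so that $\frak R_{n,m}$ is of the same type as the Duistermaat-Riemann function but summed over a single residue class modulo $m$. I would first establish the uniform regularity $\frak R_{n,m}\in\mathcal C^{1/2}$ via a Littlewood-Paley square-function estimate on dyadic frequency blocks, identical to the one for the standard Riemann function; this fixes the left endpoint $\beta=1/2$ of the spectrum.

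The heart of the argument is the pointwise H\"older exponent at a rational $t_0=p/q$ with $\gcd(p,q)=1$, which I would extract using the functional equation for the associated restricted theta series. Setting $t=p/q+h$, splitting $k$ into residue classes modulo $qm$, and applying Poisson summation yields an expansion whose leading coefficients are restricted Gauss sums
$$G(p,q;n,m;\ell):=\sum_{\substack{k\,(\mathrm{mod}\,qm)\\k\equiv -n\,(\mathrm{mod}\,m)}}e^{2\pi i(pk^2+\ell k)/(qm)}.$$
By the Chinese remainder theorem and the classical evaluation of quadratic Gauss sums, $|G|$ is either zero or of order $\sqrt{qm}$, with vanishing dictated by an explicit congruence condition on $(p,q,n,m)$. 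This produces the Jaffard-type dichotomy: at the \emph{exceptional} rationals (where the leading Gauss sum vanishes) the pointwise H\"older exponent equals exactly $3/4$, giving the right endpoint of the spectrum, whereas at the remaining rationals it equals $1/2$.

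For irrational $t_0$, Jaffard's wavelet-leader characterization equates the pointwise exponent with the rate of decay of wavelet coefficients at $t_0$, which in turn is controlled by how well $t_0$ is approximable by exceptional rationals: if $\tau$ denotes this exponent of approximation, the pointwise H\"older exponent at $t_0$ equals $\min(3/4,\,1/2+1/(2\tau))$. A Jarnik-Besicovitch computation then shows that the Hausdorff dimension of the $\tau$-level sets relative to the positive-density subset of exceptional fractions equals $2/\tau$, and the substitution $\beta=1/2+1/(2\tau)\in[1/2,3/4]$ gives $d_{\frak R_{n,m}}(\beta)=4\beta-2$.

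The main obstacle is the arithmetic input: showing that the set of exceptional fractions $p/q$ (those on which $G$ vanishes) has positive density among all reduced fractions, so that Dirichlet's theorem still produces the required approximations for Lebesgue-a.e. $t_0$. This reduces to a quadratic-reciprocity computation: decomposing $\mathbf{1}_{k\equiv -n\,(m)}$ into additive characters modulo $m$ factors $G$ as a product of a classical quadratic Gauss sum and a finite character sum, whose vanishing locus is an explicit union of congruence classes in $(p,q)\,\mathrm{mod}\,2m$. Once this density is established, both the restricted Jarnik-Besicovitch dimension theorem and the matching of the wavelet-based upper bound with the Diophantine lower bound follow by the arguments of \cite{Ja,ChUb}, yielding the stated spectrum on $[1/2,3/4]$.
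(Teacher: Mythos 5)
Your structural setup is right: $\frak R_{n,m}$ is a residue-class restriction of the Duistermaat function, the local expansion at $p/q$ is obtained by Poisson summation in arithmetic progressions, and the leading coefficient is a restricted quadratic Gauss sum. But the Jaffard dichotomy you build on top of this is inverted. At a rational $p/q$ where the Gauss sum \emph{vanishes} the leading $\sqrt{h}$ term drops out of the expansion, and the remainder bound $O(|hq|^{3/2})$ then forces a pointwise H\"older exponent of at least $3/2$, not $3/4$; these are the \emph{regular} rationals, the analogue of Gerver's odd-over-odd differentiability points. Consequently the Diophantine approximation that drives the spectrum must be by the rationals where the Gauss sum is \emph{nonzero} (so that the $\sqrt{h}$ singularity is present): if $|t_0-p_k/q_k|\sim q_k^{-r_k}$ along such a sequence then the singular term is of size $q_k^{-1/2}\sqrt{|h|}\sim |h|^{1/2+1/(2r_k)}$, giving H\"older exponent $\tfrac12+\tfrac1{2r}$. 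The value $\beta=3/4$ is reached at approximation exponent $r=2$, i.e.\ at the generic Diophantine points of full measure, not at the exceptional rationals.

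Because of this swap, the ``main obstacle'' you identify — a positive-density result for the vanishing set of $G$ — is not what is needed. What one actually needs is a positive-density subset of reduced fractions on which the Gauss sum is guaranteed \emph{not} to vanish, so that Jarn\'{i}k's theorem applies to the approximating sequence that produces the singularity. The paper achieves this with no reciprocity computation at all: for $q$ odd (and coprime to $m$) the complete Gauss sum $\tau_0=\sum_{r=0}^{q-1}e^{2\pi i p(mr-n)^2/q}$ has modulus $\sqrt q$ regardless of $n,m,p$, and the odd denominators already have positive density. The multifractal spectrum then follows from Jarn\'{i}k's theorem on the sets $A_r^*$ of irrationals whose continued-fraction convergents realize approximation exponent $r$ along odd-denominator subsequences, together with the two-sided estimate \eqref{estdiff}; no wavelet-leader characterization or Chinese-remainder analysis of the vanishing locus is required. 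So while your Poisson-plus-Gauss-sum computation at rationals is the correct starting point, the proposal would not close as written: the exceptional/non-exceptional roles and the rational-point H\"older exponents need to be exchanged, and once this is done the density question you flag dissolves.
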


\begin{proof}We first notice that Chamizo and Ubis prove in Theorem 2.3 of \cite{ChUb} that the above sum with denominator $j^2$ instead of $(mj-n)^2$ has the same spectrum as Riemann's function. The proof we give below  follows very closely their method and the one by Oskholkov and Chakhkiev \cite{OC}. \par
\medskip
We start with finding asymptotics of $\frak R_{n,m}$ near rational points. 
\begin{lemma} Let $p\in\mathbb N, q\in\mathbb N^*$ with no common divisors. There exist a positive constants $C$ such that we have the following estimate \footnote{$\sqrt{-1}=e^{i\frac\pi 2}$.}:
\begin{equation}\label{estdiff}
|\frak R_{n,m}(\frac pq+h)-\frak R_{n,m}(\frac pq)-\frac{z_0}{m}\frac {\tau_0}{q}\sqrt{h}|\leq 
C \min \bigl( |h|\sqrt{q},\, |hq|^{3/2}\bigr),
\end{equation}
%and for $q$ odd \textcolor{blue}{\sout{and $\sqrt {|h|} q<1$}}
%\begin{equation}\label{estdiffbis} \tilde C |h q|^\frac 32\leq |\frak R_{n,m}(\frac pq+h)-\frak R_{n,m}(\frac pq)-\frac{z_0}{m}\frac {\tau_0}{q}\sqrt{h}|\leq C |hq|^\frac 32,\end{equation}
where $z_0\in\mathbb C^*$ and
$$\tau_0=\sum_{r=0}^{q-1}e^{i2\pi\frac{p(m r-n)^2}{q}}.$$
For $q$ odd $\tau_0=\sqrt{q}$.
%and the notation $f=O(g)$ stands for the existence of two positive constants $C,\tilde C$ such that $Cg\leq |f|\leq \tilde Cg$.
\end{lemma}

\begin{proof}
We decompose $j=ql+r$, with $l\in\mathbb Z$ and $0\leq r<q$ and write
$$\frak R_{n,m}(\frac pq)=\sum_{j\in\mathbb Z}\frac{e^{i2\pi \frac pq (mj-n)^2}-1}{(mj-n)^2}=\sum_{r=0}^{q-1}\sum_{l\in\mathbb Z}\frac{e^{i2\pi\frac pq (mr-n)^2}-1}{(m(ql+r)-n)^2}.$$
Then, assuming for simplicity that $h>0$,
$$\frak R_{n,m}(\frac pq+h)-\frak R_{n,m}(\frac pq)=\sum_{r=0}^{q-1}e^{i2\pi\frac pq (mr-n)^2}\sum_{l\in\mathbb Z}\frac{e^{i2\pi h (m(ql+r)-n)^2}-1}{(m(ql+r)-n)^2}.$$
We use Poisson's summation formula $\sum_{l\in\mathbb Z}f(l)=\sum_{l\in\mathbb Z}\hat f(2\pi l)$ in the second summation:
$$\frak R_{n,m}(\frac pq+h)-\frak R_{n,m}(\frac pq)=\sum_{r=0}^{q-1}e^{i2\pi\frac pq (mr-n)^2}\sum_{l\in\mathbb Z}\int \frac{e^{i2\pi h (m(qx+r)-n)^2}-1}{(m(qx+r)-n)^2}e^{-i2\pi l x}dx.$$
By changing variable $y=m(qx+r)-n$, and $s=\sqrt{h}y$, we have
$$\frak R_{n,m}(\frac pq+h)-\frak R_{n,m}(\frac pq)=\frac 1{dq}\sum_{r=0}^{q-1}e^{i2\pi\frac pq (mr-n)^2}\sum_{l\in\mathbb Z}\int \frac{e^{i2\pi h y^2}-1}{y^2}e^{-i2\pi l \frac{y-(mr-n)}{dq}}dy$$
$$=\frac 1{dq}\sum_{r=0}^{q-1}e^{i2\pi\frac pq (mr-n)^2}\sum_{l\in\mathbb Z}e^{i2\pi l \frac{(mr-n)}{dq}}\int \frac{e^{i2\pi h y^2}-1}{y^2}e^{-i2\pi l \frac{y}{mq}}dy$$
$$=\frac {\sqrt{h}}{mq}\sum_{r=0}^{q-1}e^{i2\pi\frac pq (mr-n)^2}\sum_{l\in\mathbb Z}e^{i2\pi l \frac{(mr-n)}{mq}}\int \frac{e^{i2\pi s^2}-1}{s^2}e^{-i2\pi l \frac{s}{\sqrt{h}mq}}ds$$
$$=\frac {\sqrt{h}}{mq}\sum_{r=0}^{q-1}e^{i2\pi\frac pq (mr-n)^2}\sum_{l\in\mathbb Z}e^{i2\pi l \frac{(mr-n)}{mq}}J(\frac{2\pi l }{\sqrt{h}mq}),$$
where
\begin{equation}\label{Jdef}
J(x)=\int \frac{e^{i2\pi s^2}-1}{s^2}e^{-isx}ds.
\end{equation}
We note that $J(0)\neq0$ is well-defined and  by integrating by parts twice we get for $|x|>1$, see \cite{OC} 
\begin{equation}\label{Jest}
|J(x)|\leq \frac{C}{x^2}.%,\quad J(x)=O(\frac{1}{x\log^2 x}).
\end{equation}
Thus 
$$\frak R_{n,m}(\frac pq+h)-\frak R_{n,m}(\frac pq)=\frac {\sqrt{h}}{mq}\sum_{l\in\mathbb Z}J(\frac{2\pi l }{\sqrt{h}mq})\sum_{r=0}^{q-1}e^{i2\pi\frac pq (mr-n)^2}e^{i2\pi l \frac{(mr-n)}{mq}}$$
$$=\frac {\sqrt{h}}{mq}\sum_{l\in\mathbb Z}J(\frac{2\pi l }{\sqrt{h}mq})\tau_l,$$
with
$$\tau_l=e^{i2\pi l\frac n{mq}}\sum_{r=0}^{q-1}e^{i2\pi\frac{p(mr-n)^2+2lr}{q}}.$$
Now we recall that the classical bounds on Gauss sums yields $|\tau_l|\leq \sqrt{2q}$, with $|\tau_l|=\sqrt{q}$ for $q$ qdd. Then by using the estimate \eqref{Jest} on $J(\frac{2\pi l }{\sqrt{h}mq})$ for all $l$ that gets summability in $l$, we obtain the upper-bound $|hq|^{3/2}$ in \eqref{estdiff} with $z_0=J(0)$. Moreover, for $\sqrt{t}q>$ we upper-bound $J(\frac{2\pi l }{\sqrt{h}mq})$ by a constant for $|l|\leq \sqrt{h} q$ and the estimate \eqref{Jest} for the remaining $l$'s. This yields the upper-bound $|h|\sqrt{q}$ in \eqref{estdiff} with $z_0=J(0)$, so the proof of the Lemma is complete.
\end{proof}

%\begin{equation}\label{estdiff}
%\frak R_{c,d}(t_{p,q}+h)-\frak R_{c,d}(t_{p,q})=\frac{J(0)}{d}\frac {\tau_0}{q}\sqrt{h}+O_\mu(h^\frac 32q^\frac 32).
%\end{equation}
Based on these estimates around rational points one can follow the proof of Theorem 2.3 in \cite{ChUb} to get that the spectrum of singularities of $\frak R_{n,m}$ is given by \eqref{spectrum}. For the sake of completeness we give here shortly the argument, based on approximations by continued fractions (a.b.c.f.). 
Consider for $2\leq r\leq\infty$
$$A_r=\{x\in[0,1]\setminus \mathbb Q \mbox{ such that }   |x-\frac{p_k}{q_k}|=\frac1{q_k^{r_k}} \mbox{ with }\frac{p_k}{q_k}\mbox{ the a.b.c.f of }x,\, \lim\sup r_k= r\},$$
and $A_r^*$ the same set with the extra-condition that for a subsequence $q_{k_m}$ are odd numbers and $r_{k_m}$ converges to $r$. Concerning the Hausdorff dimensions of these sets one has from Jarn\'{i}k's theorems (\cite{Fa}):
\begin{equation}\label{dimH}
\dim_{\mathcal H} A_r=\dim_{\mathcal H} A_r^*=\dim_{\mathcal H}\cup_{s\geq r}A_s=\frac 2r.
\end{equation}
For $x\in A_r$ and a small $h\neq 0$ there exists $k$ such that
$$|x-\frac{p_k}{q_k}|=\frac1{q_k^{r_k}}\leq |h|<\frac1{q_{k-1}^{r_{k-1}}}=|x-\frac{p_{k-1}}{q_{k-1}}|.$$
As for a.b.c.f. $|x-\frac{p_k}{q_k}|\leq \frac 1{q_kq_{k+1}}$ we obtain
$$|h|^{-\frac 1{r_k}}\leq q_k<|h|^{-1+\frac 1{r_{k-1}}}.$$
Then, combining with \eqref{estdiff}, we have 
\begin{equation}\label{estdiffter}
|\frak R_{n,m}(x+h)-\frak R_{n,m}(x)|\leq|\frak R_{n,m}(x+h)-\frak R_{n,m}(\frac{p_k}{q_k})-\frac{z_0}{m}\frac {\tau_0}{q_k}\sqrt{x+h-\frac{p_k}{q_k}}|
\end{equation}
$$+|\frak R_{n,m}(x)-\frak R_{n,m}(\frac{p_k}{q_k})-\frac{z_0}{d}\frac {\tau_0}{q_k}\sqrt{x-\frac{p_k}{q_k}}|+|\frac{z_0}{d}\frac {\tau_0}{q_k}\sqrt{x+h-\frac{p_k}{q_k}}-\frac{z_0}{m}\frac {\tau_0}{q_k}\sqrt{x-\frac{p_k}{q_k}}|$$
$$\leq C h\sqrt{q_k}+C\frac{\sqrt{h}}{\sqrt{q_k}}\leq Ch^{\frac 12+\frac 1{2r_{k-1}}}+Ch^{\frac 12+\frac 1{2r_{k}}}.$$
Finally we note that for $q_k$ odd, by taking in \eqref{estdiff} the value $h_k=x-\frac {p_k}{q_k}$ so that $|h_k|=\frac1{ q_k^{r_k}}$, we have, as $r_k\geq 2$,
$$|\frak R_{n,m}(x)-\frak R_{n,m}(x-h_k)|\geq Ch_k^{\frac 12+\frac 1{2r_{k}}}-Ch_k^{\frac 32-\frac 3{2r_{k}}}\geq Ch_k^{\frac 12+\frac 1{2r_{k}}}.$$
Then at $x\in A_r^*$ the function $\frak R_{n,m}$ has local H\"older exponent $\frac 12+\frac 1{2r}$ and by \eqref{dimH} we get
$$d_{\frak R_{n,m}}(\frac 12+\frac 1{2r})\geq \dim_{\mathcal H} A_r^*=\frac 2r.$$
We obtain the converse inequality by noting that if the function $\frak R_{n,m}$ has local H\"older exponent $\frac 12+\frac 1{2r}$ at some $x\in [0,1]$, then from \eqref{estdiffter} the point $x$ is either in $\mathbb Q$ or in $ \cup_{s\geq r}A_s$, of which Hausdorff dimension is given in \eqref{dimH}. By varying $r\in [2,\infty]$ we recover the spectrum of $\frak R_{n,m}$ to be the one given in \eqref{spectrum}. \\

\end{proof}

\begin{prop}\label{propfractal} Let $n\in\mathbb N,m\in\mathbb N^*$. 
The function 
\begin{equation}\label{varR}
\frak R_{n,m}(t)=\sum_{j\in\mathbb Z}\frac{e^{i2\pi t(mj-n)^2}-1}{(mj-n)^2},
\end{equation} 
satisfies the multifractal formalism formula \eqref{FP}. 
\end{prop}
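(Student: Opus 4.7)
The plan is to determine the Besov exponent $\eta := \eta_{\frak R_{n,m}}$ explicitly and then to observe that formula \eqref{FP} follows by a Legendre inversion against the spectrum of Proposition \ref{propspectrum}. Inverting $d(\beta) = 4\beta - 2$ on $[1/2,3/4]$ identifies the candidate
$$\eta(p) \;=\; \min\!\left(\tfrac{3p}{4},\,\tfrac{p}{2} + 1\right), \qquad p > 0,$$
and substituting this into \eqref{FP} indeed reproduces $d(\beta) = 4\beta - 2$ on $[1/2,3/4]$, the infimum being attained at $p=4$ on both branches. It therefore suffices to verify this explicit formula for $\eta$.

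I would work with the Littlewood--Paley projections $\Delta_N \frak R_{n,m}(t) := \sum_{N \leq (mj-n)^2 < 2N}\frac{e^{i2\pi t(mj-n)^2}}{(mj-n)^2}$. Since all frequencies in the block are comparable to $N$,
$$\|\Delta_N \frak R_{n,m}\|_{L^p(\mathbb T)} \asymp N^{-1}\, \|S_N\|_{L^p(\mathbb T)},\qquad S_N(t) := \sum_{N \leq (mj-n)^2 < 2N} e^{i2\pi t(mj-n)^2},$$
and the Besov characterisation $\frak R_{n,m} \in B^{s/p,\infty}_p \iff \|S_N\|_{L^p} \lesssim N^{1-s/p}$ reduces the question to matching two-sided $L^p$ bounds on the exponential sum $S_N$, which runs over $\asymp \sqrt N/m$ frequencies.

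Three benchmark exponents pin down $\eta$. At $p=2$, Parseval gives $\|S_N\|_{L^2}^2 \asymp \sqrt N /m$, so $\eta(2)=3/2$. At $p=\infty$, the triangle inequality saturated at $t=0$ yields $\|S_N\|_{L^\infty} \asymp \sqrt N/m$, and $|S_N(t)| \gtrsim \sqrt N/m$ persists on a neighbourhood $|t| \lesssim 1/N$ of the origin (where the phases $e^{i2\pi t(mj-n)^2}$ have not yet decorrelated), producing the lower bound $\|S_N\|_{L^p} \gtrsim N^{1/2-1/p}/m$ for every $p\geq 1$. The decisive benchmark is $p=4$, where
$$\|S_N\|_{L^4}^4 \;=\; \#\!\left\{(j_1,\dots,j_4):\, \textstyle\sum_i (-1)^i(mj_i-n)^2 = 0,\; N \leq (mj_i-n)^2 < 2N\right\}.$$
After the substitution $k_i = mj_i - n$, this counts representations of zero as a signed sum of four squares subject to the congruence $k_i \equiv -n \pmod m$. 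A divisor-type bound -- parallel to the Gauss-sum analysis $|\tau_l|\lesssim \sqrt q$ supporting Proposition \ref{propspectrum} -- gives $\|S_N\|_{L^4}^4 \asymp N/m^2$ up to logs, hence $\eta(4)=3$. Log-convexity of $p \mapsto \|S_N\|_{L^p}$ on the probability space $\mathbb T$ then yields the matching upper bounds $\|S_N\|_{L^p} \lesssim N^{1/4}$ on $2 \leq p \leq 4$ and $\|S_N\|_{L^p} \lesssim N^{1/2 - 1/p}$ on $p\geq 4$, which combined with the lower bounds from the previous paragraph identify $\eta$ with the claimed piecewise-linear formula.

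The main obstacle is the $L^4$ counting in the arithmetic-progression setting: the standard divisor estimate $\|S_N\|_{L^4}^4 \lesssim (\sqrt N/m)^2 \log N$ must be adapted to the residue class $k_i \equiv -n \pmod m$ -- together with the matching lower bound $\gtrsim (\sqrt N/m)^2$ -- by expanding the indicator of the residue class into Dirichlet characters modulo $m$ and invoking Weil-type estimates on the resulting twisted character sums, much in the spirit of the $\tau_0$ computation underlying Proposition \ref{propspectrum}. Everything else (Besov equivalence, log-convexity interpolation, and the final Legendre inversion yielding \eqref{FP}) is routine.
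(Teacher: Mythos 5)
Your route is genuinely different from the paper's. Both proofs reduce the matter to two-sided $L^p$ estimates on the dyadic blocks of $\frak R_{n,m}'$ and then verify \eqref{FP} by the same Legendre-transform algebra; but whereas the paper derives the $L^p$ bounds for all $p$ uniformly from the rational-time asymptotics of the underlying exponential sum (Lemma~\ref{estexpsums}, proved by reading off the free Schr\"odinger evolution at $t=a/q$ -- the Talbot picture -- and then integrating over the Farey covering of $[0,1]$ furnished by Dirichlet's principle), you propose to pin down only the three benchmarks $p=2$ (Parseval), $p=\infty$ (triangle inequality and stability of $|S_N|$ on the window $|t|\lesssim 1/N$), and $p=4$ (additive counting of signed sums of four squares in a residue class), and to fill in all intermediate $p$ by log-convexity and $L^p$-monotonicity on the probability space $\mathbb T$. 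This is the classical Zalcwasser/Bourgain style argument and it is closer in spirit to what Jaffard actually used in the model case $m=1$, $n=0$; it has the virtue of isolating the arithmetic difficulty in a single $L^4$ count, and once one accepts the crude divisor bound $r_2(M)\ll_\epsilon M^\epsilon$ -- obtained simply by ignoring the congruence $k_i\equiv -n\ (m)$, so no Dirichlet-character expansion or Weil estimate is actually needed -- the $N^\epsilon$-lossy upper bound $\|S_N\|_{L^4}^4\lesssim_\epsilon N^{1+\epsilon}/m^2$ is immediate, the matching lower bound being the diagonal contribution. That $\epsilon$-loss is harmless for computing a $\sup$-type Besov exponent, so the $L^4$ step is less delicate than your final paragraph makes it sound.

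There is, however, one genuine gap: your lower bounds do not determine $\eta(p)$ for $0<p<2$. The near-origin argument gives $\|S_N\|_{L^p}\gtrsim N^{1/2-1/p}$, which is sharp only for $p\geq 4$; for $2\leq p\leq 4$ you can fall back on $\|S_N\|_{L^p}\geq\|S_N\|_{L^2}\asymp N^{1/4}$; but for $p<2$ this monotonicity goes the wrong way, and $N^{1/2-1/p}$ is far below $N^{1/4}$. This matters: if one only knew the weaker bound $\eta(p)\leq p/2+1$ on $(0,2)$, then at $\beta=3/4$ the candidate values $\beta p-\eta(p)+1=p/4$ would drive the infimum in \eqref{FP} below $d(3/4)=1$, and the formula would fail. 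The fix is a reverse H\"older step, which you should make explicit: writing $\tfrac12=\tfrac\lambda p+\tfrac{1-\lambda}4$ and using $\|S_N\|_{L^2}\leq\|S_N\|_{L^p}^\lambda\|S_N\|_{L^4}^{1-\lambda}$ together with the $L^2$ lower bound and the $L^4$ upper bound gives $\|S_N\|_{L^p}\gtrsim N^{1/4}(\log N)^{-c}$ for every $0<p<2$, which is enough for the $B^{s/p,\infty}_p$ supremum and closes the argument. The paper avoids this issue because its lower bound is obtained by integrating the Gauss-sum estimate over the union of rational neighbourhoods with $q\asymp N$, which treats every $p<4$ at once; your benchmark-plus-interpolation scheme has to supply the $p<2$ lower bound separately.
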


\begin{proof}In view of Proposition \ref{propspectrum} and of the multifractal formalism formula \eqref{FP} we have to compute
$$\eta_{\frak R_{n,m}}(p)=\sup\{s,\,\frak R_{n,m}\in B_p^{\frac sp,\infty}\}.$$
Recall that Riemann's function $\varphi_D$ is defined in \eqref{Duis}. Jaffard proved in \cite{Ja} the multifractal formalism formula \eqref{FP} for $\varphi_D$  using sharp bounds of  the $L^p$ norms of the partial sums of $\varphi_D'$ from \cite{Za}. In the following we shall obtain similar bounds for $\frak R_{n,m}'$. \\

Up to rescaling the variable $t$ by $\frac 1m$ we are thus interested in $L^p$ norms of dyadic partial sums of 
$$\sum_{j\in\mathbb Z} e^{i2\pi (tj^2-t\frac{2n}{m}j)}.$$
%To do so we shall look at particular region of integration centered around rationals, i.e. using the major arcs of the circle method. 

We shall need the following general lemma on partial sums of exponential sums. It is a classical type of result, initially motivated by the study of Vinogradov's mean value conjecture. The short proof we give is based on the explicit expression at rational times of the fundamental solution of the linear Schr\"odinger equation with periodic boundary conditions given in \eqref{Dirac3}. As it is well known this fundamental solution is intimately linked to the Talbot effect.

\begin{lemma}\label{estexpsums}
Let $N\in\mathbb N$ and $\sigma_N(x)=\sigma(\frac xN)$ where $\sigma$ is a smooth real positive function with compact support.  Let $a,b,q\in\mathbb N$, $1\leq a<q\leq N$, $(a,q)=1$ and $0\leq b<q$.\\
i) For $q$ odd there exists $\delta>0$, depending just on $\sigma$, such that 
\begin{equation}\label{equiv}
C\frac{N}{\sqrt{q}}\leq |\sum_{j\in\mathbb Z}\sigma_N(j)\,e^{i2\pi(tj^2-xj)}|\leq \tilde C\frac{N}{\sqrt{q}},
\end{equation}
if
$$|t-\frac {a}q|\leq \frac \delta {N^2}, \quad |x-\frac bq|\leq \frac \delta {N}.$$
ii) We have the upper-bound
\begin{equation}\label{upperbd}
|\sum_{j\in\mathbb Z}\sigma_N(j)\,e^{i2\pi(tj^2-xj)}|\leq C\frac{N}{\sqrt{q}(1+N\sqrt{|t-\frac aq|})},
\end{equation}
if
$$|t-\frac {a}q|< \frac 1{qN}.$$

\end{lemma}

\begin{remark}
The bounds in \eqref{equiv} can be found in the literature for $q$ varying up to $\sqrt{N}$, with $\delta=1$ (see Lemma 1 in page 56 of \cite{Vi}, or see for instance (2.46)-(2.47) in \cite{Bo}). A different proof of the upper-bound \eqref{upperbd} for $q<N$ can be found in Lemma 3.18 in \cite{Bo}.
\end{remark}

\begin{proof}We denote $\tilde t=\frac t{2\pi}$.
For $\tilde t=\frac 1{2\pi}\frac aq+h$, we see the (conjugated) sum as a linear Schr\"odinger solution:
$$S_N(t,x):=\sum_{j\in\mathbb Z}\sigma_N(j)\,e^{-i4\pi^2\tilde tj^2+i2\pi x}=(e^{ih\Delta}e^{i\frac aq\Delta}(\sum_{j\in\mathbb Z}\sigma_N(j)\,e^{i2\pi\cdot j}))(x)$$
$$=(e^{ih\Delta}e^{i\frac aq\Delta}(\mathcal F^{-1}(\sigma_N)\star \mathcal F^{-1}(\sum_{j\in\mathbb Z} e^{i2\pi\cdot j})))(x)=(e^{ih\Delta}e^{i\frac aq\Delta}(\mathcal F^{-1}(\sigma_N)\star (\sum_{j\in\mathbb Z} e^{i2\pi\cdot j})))(x)$$
$$=(e^{ih\Delta}(\mathcal F^{-1}(\sigma_N)\star e^{i\frac 1{2\pi}\frac aq\Delta}(\sum_{j\in\mathbb Z} e^{i2\pi\cdot j})))(x).$$
Now we use the fact that (for instance choosing $M=2\pi$ in formulas (37) combined with (42) from \cite{DHV}):
$$e^{i\frac 1{2\pi}\frac aq\Delta}(\sum_{j\in\mathbb Z} e^{i2\pi\cdot j})(x)=\sum_{j\in\mathbb Z} \tau_j\delta(x-\frac jq),$$
with the coefficients $\tau_j$ given in terms of Gauss sums. In particular $|\tau_j|=\frac 1{\sqrt{q}}$ if $q$ is odd and $|\tau_j|\leq\frac {\sqrt2}{\sqrt{q}}$.
Then we have
$$S_N(t,x)=(e^{ih\Delta}(\sum_{j\in\mathbb Z} \tau_j \mathcal F^{-1}(\sigma_N)(\cdot -\frac jq))(x)=\sum_{j\in\mathbb Z} \tau_j(e^{ih\Delta}( \mathcal F^{-1}(\sigma_N)))(x-\frac jq)$$
$$=\sum_{j\in\mathbb Z} \tau_j \int e^{i(x-\frac jq)\xi} e^{-ih\xi^2}\sigma_N(\xi)d\xi=N\sum_{j\in\mathbb Z} \tau_j\int e^{i(x-\frac jq)N\xi} e^{-ihN^2\xi^2}\sigma(\xi)d\xi$$
$$=N\sum_{j\in\mathbb Z} \tau_j \int e^{i(x-\frac bq+\frac {b-j}q)N\xi} e^{-ihN^2\xi^2}\sigma(\xi)d\xi$$
$$=Nc_{b}\int e^{i(x-\frac bq)N\xi} e^{-ihN^2\xi^2}\sigma(\xi)d\xi$$
$$+N\sum_{j\neq b} \tau_j\int  e^{i(x-\frac bq+\frac {b-j}q)N\xi} e^{-ihN^2\xi^2}\sigma(\xi)d\xi=:I_1+I_2.$$
The first term $I_1$ gives the right growth and is not zero if  $q$ is odd as $|hN^2|<\delta$ and $|x-\frac bq|<\frac\delta N$. The last sum $I_2$ can be upper-bounded by two integrations by parts from the $\xi-$linear phase, that insures summation in $j$ and the upper-bound $C\frac{N}{\sqrt{q}}(\frac q{N})^2$. Thus  \eqref{equiv} follows. 

For getting the upper-bound \eqref{upperbd} we write:
$$S_N(t,x)=N\sum_{j=qx}c_{j} \int e^{i(x-\frac jq)N\xi} e^{-ihN^2\xi^2}\sigma(\xi)d\xi$$
$$+N\sum_{j\neq qx} \tau_j \int e^{i(x-\frac jq)N\xi} e^{-ihN^2\xi^2}\sigma(\xi)d\xi=:\tilde I_1+\tilde I_2.$$
In $\tilde I_2$ we use again integrations by parts yield the upper-bound $C\frac{N}{\sqrt{q}}$ for $\tilde I_2$. 
For $|t-\frac aq|\leq \frac \delta {N^2}$ the upper-bound \eqref{upperbd} follows by using the fact that the integrals in $\tilde I_1$ is bounded by a constant depending only on $\sigma$ . For $\frac \delta{N^2}\leq |t-\frac aq|\leq \frac 1{qN}$ we get \eqref{upperbd} by using the dispersion inequality on $\tilde I_1$ yielding the upper-bound $C\frac{N}{\sqrt{q}}\frac1{\sqrt{hN^2}}$. \end{proof}

For $N\in\mathbb N$ we consider the $L^p$ norm of the dyadic partial sum of $\frak R_{n,m}'$:
$$I_{N,p}:=\int _0^1 |\sum_{j\in\mathbb Z} \sigma_N(j)\,e^{i2\pi (tj^2-t\frac{2n}{m}j)}|^pdt,$$
with $\sigma$ a smooth real positive function valued $1$ on $1<|x|<2$ and vanishing on $|x|<\frac 12$ and $|x|>4$

First we get a lower bound by applying \eqref{equiv} with $x=t\frac{2n}{m}$, $a=m\tilde a, b=2n\tilde a$, with $q,N$ large with respect to $n,m$, with $(m\tilde a,q)=1$. Indeed then the condition on $x$ is satisfied as for these choices of parameters we have 
$$|t-\frac {m\tilde a}q|\leq \frac \delta{N^2} \Longrightarrow |x-\frac {2n\tilde a}q|=|t\frac{2n}{m}-\frac {2n\tilde a}q|\leq \frac{2n}{m}\frac \delta{N^2}+|\frac {m\tilde a}q\frac{2n}{m}-\frac {2n\tilde a}q|=\frac{2n}{m}\frac \delta{N^2}\leq \frac \delta N.$$
Thus we get the following lower bound by integrating on one region $|t-\frac {m\tilde a}q|\leq \frac \delta{N^2}$ for  instance for $q=2$ by applying \eqref{equiv}:
$$I_{N,p}\geq \int _{|t-\frac {m\tilde a}q|\leq \frac \delta{N^2}} |\sum_{j\in\mathbb Z} \sigma_N(j)\,e^{i2\pi (tj^2-\frac{2tn}{m}j)}|^pdt\geq C N^{p-2},$$
that will suit our purposes for $p>4$. For $p= 4$ we shall need to improve this lower bound by integrating on the union of all the disjoint regions $|t-\frac {m\tilde a}{q}|\leq \frac \delta{N^2}$ for all $q\leq N$. We shall use rough estimates that are enough for our porposes:
$$I_{N,p}\geq  C N^{2}\sum_{q\leq N} \frac{\sharp \{1\leq m\tilde a<q,(m\tilde a,q)=1\}}{q^2}
\geq C N^{2}\sum_{q\leq N,\, q \mbox{ prime }}\frac{\lfloor \frac q m\rfloor-1}{q^2}$$
$$\geq C(m) N^{2}\sum_{q\leq N,\, q \mbox{ prime }}\frac{1}{q}\geq C(m) N^{2}\sum_{j=1}^{\lfloor \log N\rfloor}\sum_{2^{j-1}\leq q<2^j,\, q \mbox{ prime }}\frac{1}{q}$$
$$\geq C(m) N^{2}\sum_{j=1}^{\lfloor \log N\rfloor}\frac{1}{2^j}\sharp\{q, 2^{j-1}\leq q<2^j,\, q \mbox{ prime}\}.$$
We use now the law of distribution of prime numbers, i.e. that the number of prime numbers less than a given number $x$ grows as $\frac {x}{\log x}$ to get
$$I_{N,p} \geq C(m)\, N^{2}\sum_{j=1}^{\lfloor \log N\rfloor}\frac{1}{j}\,\geq C(m) N^2\log (\log N).$$
For the remaining case $p<4$ we do the above calculation but just in the region $N/2<q\leq N$.
Thus we get the lower-bound:
\begin{equation}\label{Lpequiv}
I_{N,p}\geq C\left\{\begin{array}{c} N^{p-2}, \quad p>4,\\ N^2\log (\log N),\quad  p=4,\\ \frac{N^\frac p2}{\log  N}, \quad 0<p<4.\end{array}\right.\end{equation}

Finally we shall get upper-bounds almost similar to the above lower-bound for $I_{N,p}$. 
%First, by the Calderon-Zigmund theory of multipliers,
%$$I_{N,p}\leq \frac {C_p}{m^2}\int _0^1 |\sum_{j\in\mathbb Z} \sigma_je^{i2\pi (tj^2-\frac{2tn}{m}j)}|^pdt,$$
%where $0\leq \sigma_j\leq 1, \sigma_n=1$ on $[-N,N]$, $\sigma_n=0$ outside $[-2N,2N]$, and 
From Dirichlet principle we have the following covering of $[0,1]$: for all $t\in [0,1]$, there is $a\in\mathbb N,q\in\mathbb N^*, (a,q)=1, q\leq N$ such that
$$|t-\frac aq|<\frac 1{qN}.$$ 
%Thus we have\footnote{\textcolor{blue}{We need either to have \eqref{equivbis} valid also for $q=N$ i.e $$|\sum_{j=1}^N e^{i2\pi(jx+j^2t)}|\leq c\sqrt{N}$$ for $|t-\frac aN|\leq\frac 1{N^2}$, or to cover the region $\cup_{a=0}^{N-1}\{t\in [0,1], |t-\frac aN|\leq \frac 1{N^2}\}$ by regions of type $\{t\in [0,1], |t-\frac bq|\leq \frac C{qN}\}$, with $q<N$ (for this it is enough to get for any $a\leq N$ the existence of values $b,q$ such that $(b,q)=1$, $q\leq N-1$ and $|aq-bN|\leq C$, with $C$ a universal constant) so that we can apply \eqref{equivbis}.}}
%, by using first Calderon-Zigmund theory of multipliers {\textcolor{blue}{(we need a reference)}}, and 
Then from \eqref{upperbd},
$$I_{N,p}\leq C_p\int _0^1 |\sum_{j\in\mathbb Z} \sigma_N(j)\,e^{i2\pi (tj^2-t\frac{2n}{m}j)}|^pdt$$
$$\leq C_{m,p}\sum_{q=1}^{N}\sum_{a=0}^{q-1}\int_{|t-\frac aq|<\frac 1{qN}}|\sum_{j\in\mathbb Z}\sigma_N\,e^{i2\pi (tj^2-t\frac{2n}{m}j)}|^pdt$$
$$\leq C_{m,p}\sum_{q=1}^{N}q\left(\frac{N^{p-2}}{q^\frac p2}+\frac{1}{q^\frac p2}\int_{\frac 1{N^2}<|y|<\frac1{qN}}\frac 1{|y|^\frac p2}dy\right).$$
For the first term we have the following estimate
$$\sum_{q=1}^{N}q\frac{N^{p-2}}{q^\frac p2}\leq C\left\{\begin{array}{c} N^{p-2}, \quad p>4, \\ N^2\log N,\quad  p=4,\\N^\frac p2, \quad 0<p<4,\end{array}\right.$$
For the second term we use:
$$\int_{\frac 1{N^2}|y|<\frac1{qN}}\frac 1{|y|^\frac p2}dy \leq C\left\{\begin{array}{c} N^{p-2}, \quad p>2, \\ \log \frac Nq,\quad  p=2,\\\frac 1{(qN)^{1-\frac p2}}, \quad 0<p<2,\end{array}\right.$$
that implies
$$\sum_{q=1}^{N}\frac{q}{q^\frac p2}\int_{\frac 1{N^2}|y|<\frac1{qN}}\frac 1{|y|^\frac p2}dy \leq C\left\{\begin{array}{c} N^{p-2}, \quad p>4, \\ N^2\log N,\quad  p=4,\\ N^\frac p2,\quad 0<p<4,\end{array}\right.$$
and thus the same upper-bounds for $I_{N,p}$. 
Combining with \eqref{Lpequiv} we have obtained
\begin{equation}\label{boundsLp}
C\left\{\begin{array}{c} N^{p-2}, \quad p>4, \\ N^2\log (\log N),\quad  p=4,\\ \frac{N^\frac p2}{\log N},\quad 0<p<4,\end{array}\right.\leq \int_0^{1}|\sigma_N\sum_{j\in\mathbb Z} e^{i2\pi t(j^2-\frac {2n}mj)}|^pdt\leq C\left\{\begin{array}{c} N^{p-2}, \quad p>4,\\ N^2\log N,\quad  p=4,\\ N^\frac p2, \quad 0<p<4.\end{array}\right.
\end{equation}
Now we note that the function is $m-$periodic, with frequencies $j(j-\frac {2n}{m})$. 
As $D^k\leq |j(j-\frac{2n}m)|<D^{k+1}$ implies $C_{n,m,D}D^\frac k2<|j|<\tilde C_{n,m,D}D^\frac {k+1}2$ we can 
use \eqref{boundsLp}, where all the powers of $N$ in the equivalent are positive, to obtain for large $D-$adic blocs $\Delta_k$:
$$C\left\{\begin{array}{c} D^{\frac{k(p-2)}{2p}}, \quad p>4, \\ D^\frac k2\log^\frac 12 \log D^k,\quad  p=4,\\ \frac{D^\frac k4}{\log D^k},\quad 0<p<4,\end{array}\right.\leq\|\Delta_k\sum_{j\in\mathbb Z}e^{i2\pi t(j-\frac nm)^2}\|_{L^p}\leq C\left\{\begin{array}{c} D^{\frac{k(p-2)}{2p}}, \quad p>4,\\ D^\frac k2\log^\frac 12 D^k,\quad p=4,\\ D^\frac k4, \quad 0<p<4. \end{array}\right.$$
Therefore we get that $\frak R_{n,m}'$ belongs to $B_p^{-\frac 14,\infty}$ for $0<p<4$ and to $B_p^{-\frac 12+\frac 1p,\infty}$ for $p>4$, and these are optimal, yielding
$$\eta_{\frak R_{n,m}}(p)=\sup\{s,\,\frak R_{n,m}\in B_p^{\frac sp,\infty}\}=\left\{\begin{array}{c} 1+\frac p2, \quad p\geq4,\\\frac{3p}4, \quad 0<p<4. \end{array}\right.$$
Then the relation \eqref{FP} follows as for $\beta\in[\frac 12,\frac 34]$ we have 
$$\inf_p (\beta p-\eta_{\frak R_{n,m}}(p)+1)-d_{\frak R_{n,m}}(\beta)$$
$$=\inf(\{(\beta-\frac 12)(p-4), p\geq4\}\cup \{(p-4)(\beta-\frac{3p}4), 0<p<4\})=0. $$
\end{proof}

\begin{remark}
Riemann's function \eqref{1.0} is the integral of the Jacobi theta function of one variable $\vartheta(t)=\sum_{j\in\mathbb Z}e^{i\pi t j^2}$, and it follows from Jaffard's resulrs in \cite{Ja} that it is a multifractal function satisfying the multifractal formalism of Frish and Parisi \eqref{FP}. From Propositions \ref{propspectrum}-\ref{propfractal} it follows that these properties are also valid for the integrals of the Jacobi theta companions functions, for instance $\tilde{\tilde{\vartheta}}(t)=\sum_{j\in\mathbb Z}e^{i\pi t (j+\frac 12)^2}$.
\end{remark}

\bigskip

{\bf{Acknowledgements:}}  This research is partially supported by the Institut Universitaire
de France, by the French ANR project SingFlows, by ERCEA Advanced Grant 2014 669689
- HADE, by MEIC (Spain) projects Severo Ochoa SEV-2017-0718, and PGC2018-1228
094522-B-I00, and by Eusko Jaurlaritza project IT1247-19 and BERC program.

\end{document}